\newcommand{\re}{\mathbb{R}}
\newcommand{\rr}{\mathbb{R}}
\newtheorem{theorem}{Theorem}
\newtheorem{proposition}{Propostion}
\newtheorem{lemma}{Lemma}
\newtheorem{remark}{Remark}[section]
\newtheorem{corollary}[theorem]{Corollary}
\title{Random flights connecting Porous Medium and Euler-Poisson-Darboux equations}
\numberwithin{equation}{section}
\author{ Alessandro De Gregorio}
\author{Enzo Orsingher}
\newcommand{\de}{\mathrm{d}}
\newcommand{\xd}{{\bf x}_d}
\newcommand{\bb}{\mathbb}
\address{Dipartimento di Scienze Statistiche,
``Sapienza" University of Rome,
P.le Aldo Moro, 5 - 00185, Rome, Italy}
\email{alessandro.degregorio@uniroma1.it}
\email{enzo.orsingher@uniroma1.it}
\date{\today}
\begin{document}

\maketitle
\begin{abstract}
In this paper we consider the Porous Medium Equation and establish a relationship between its  Kompanets-Zel'dovich-Barenblatt solution $u(\xd,t), \xd\in \mathbb R^d,t>0$ and random flights. The time-rescaled version of $u(\xd,t)$ is the fundamental solution of the Euler-Poisson-Darboux equation which governs the distribution of random flights performed by a particle whose displacements have a Dirichlet probability distribution and choosing directions uniformly on a $d$-dimensional sphere (see, e.g., \cite{dgo}). 

We consider the space-fractional version of the Euler-Poisson-Darboux equation and present the solution of the related Cauchy problem in terms of the probability distributions of random flights governed by the classical Euler-Poisson-Darboux equation. Furthermore, this research is also aimed at studying the relationship between the solutions of a fractional Porous Medium Equation and the fractional  Euler-Poisson-Darboux equation. 

A considerable part of the paper is devoted to the analysis of the probabilistic tools of the solutions of the fractional equations. Also the extension to higher-order Euler-Poisson-Darboux equation is considered and the solutions interpreted as compositions of laws of pseudoprocesses.

\end{abstract}

{\it Keywords}: Bessel functions, Dirichlet probability distributions, fractional Laplacian, pseudoprocesses, subordinators, stable processes.

\section{Introduction}

The starting point of our paper is the Porous Medium Equation (PME)
\begin{equation}\label{eq:pme}
\frac{\partial u}{\partial t }=\Delta (u^m),\quad m>1, d\geq 1, t>0,
\end{equation}
with $u:=u(\xd,t), \xd:=(x_1,x_2,...,x_d)\in \rr^d, t>0,\Delta:=\sum_{j=1}^d \frac{\partial^2}{\partial x_j^2}$ (Laplace operator). 
Sometimes the PME is written as
\begin{equation}\label{eq:pme2}
\frac{\partial u}{\partial t }=\nabla(mu^{m-1}\nabla u),
\end{equation}
where $\nabla:=(\frac{\partial}{\partial x_1},...,\frac{\partial}{\partial x_d})$ is the gradient operator. 

 The equation \eqref{eq:pme} is one of the simplest examples of a nonlinear evolution equation of parabolic type. It appears in the description of different phenomena 
 and its theory and properties are substantially far from those of the heat equation, $\frac{\partial u}{\partial t } = \Delta u$. Hence, this explains the interest of its study, both for the pure mathematician and for the applied scientist. There are numerous generalizations of \eqref{eq:pme} (see, e.g., \cite{vazquez}); for example in the form $\frac{\partial u}{\partial t }=\Delta f(u)$ with a suitable function $f.$ There is also the hyperbolic version of the PME in the form $\frac{\partial^2 u}{\partial t^2 }=\Delta (u^m).$

The PME was introduced with the aim of overcoming the paradox of infinite velocity of the heat flow of the classical Fourier equation. An alternative approach circumventing this paradox of propagation at infinite velocity of the heat flow is the Cattaneo-Maxwell equation (see \cite{catt}) which is substantially a telegraph equation.

The PME was also considered in the hyperbolic half-space $\mathbb H_d:=\{\xd\in\mathbb R^d:x_d>0\}$ (see \cite{vazhyp}), where $\Delta$ is replaced by the hyperbolic Laplacian
$x_d^2\Delta+(2-d)x_d,
$
also written in hyperbolic coordinates as 
$$\frac{1}{(\sinh r)^{d-1}}\frac{\partial}{\partial r}\left(\frac{\partial}{\partial r}{(\sinh r)^{d-1}\frac{\partial}{\partial r} }\right)u+\frac{1}{(\sinh r)^2}\Delta_{\mathbb S_1^{d-1}}u$$
where $\Delta_{\mathbb S_1^{d-1}}$ is the spherical Laplace-Beltrami operator. 
Space-fractional versions of the PME have been studied, for instance, in \cite{vazquez2} and \cite{biler}.

Under the initial condition $u(\xd,0)=\delta(\xd),$ the source-type solution to \eqref{eq:pme} (also called sometimes Barenblatt solution) was found by Zel’dovich, Kompanets \cite{zel}, Barenblatt \cite{bar} (and later by Pattle \cite{pattle}) and has the form
\begin{equation}\label{eq:bsolint}
u(\xd,t)=Ct^{-\alpha}\left(1-B\frac{||\xd||^2}{t^{2\beta}}\right)_+^{\frac{1}{m-1}},
\end{equation}
where $(x)_+:=\max(0,x),$ $$\alpha:=\frac{d}{2+d(m-1)},\quad\beta:=\frac\alpha d,\quad B:=\frac{\alpha(m-1)}{2md}.$$ The positive constant $C$ is a normalizing factor which will be given below. 

The aim of this paper is to present probabilistic interpretations of the solution \eqref{eq:bsolint}. In particular, we show that the PME is associated with random flights of different forms. This must be meant in the sense that the solutions \eqref{eq:bsolint} of the equation \eqref{eq:pme}, coincide, with suitable choices of the parameters, with the probability distribution of random flights for a fixed time $t>0$ and a fixed number of changes of direction. These types of stochastic processes have been studied over the years by different authors; see, e.g., \cite{stroock}, \cite{stadje}, \cite{stadje2}, \cite{dic}, \cite{OrsDG}, \cite{lecaer}, \cite{lecaer2}, \cite{DG12}, \cite{dgo}, \cite{pog}, \cite{ghosh}, \cite{garra}. They represent the stochastic motion of a particle moving in $\mathbb R^d$ with constant speed $c>0$ and direction changing at Poisson-paced epochs and uniformly oriented displacements (see \cite{stadje}, \cite{stadje2} and \cite{OrsDG}), or with Dirichlet joint distributed steps (see \cite{lecaer} and \cite{dgo}). 

The fundamental solution (which will be given by \eqref{eq:bsol}) of the PME has the same structure of the fundamental solution of the Euler-Poisson-Darboux (EPD) equation (studied, for instance, in \cite{garra2})
\begin{equation}\label{eq:epd0}
\frac{\partial^2 u}{\partial t^2}+\frac{2\gamma+d-1}{t}\frac{\partial u}{\partial t }=c^2\Delta u,\quad \gamma >0,
\end{equation}
that is
\begin{align}\label{eq:epdsol}
u(\xd,t)= \frac{\Gamma(\gamma+\frac12)}{\pi^{d/2}\Gamma(\gamma)}\frac{1}{(ct)^d}\left(1-\frac{||\xd||^2}{c^2t^2}\right)_+^{\gamma-1}.
\end{align}
By suitably changing the time scale, that is $t'=t^{\beta},$ the solution of  \eqref{eq:bsolint} takes the form \eqref{eq:epdsol}. 

The EPD equation \eqref{eq:epd0} for $d=1$ represents a generalization of the telegraph equation
\begin{equation*}
\frac{\partial^2 u}{\partial t^2}+2	\lambda(t) \frac{\partial u}{\partial t }=c^2\frac{\partial^2 u}{\partial x^2},
\end{equation*}
which is the governing equation of the probability distribution of a telegraph process where the reversals of velocity are paced by a non-homogeneous Poisson process with rate $\lambda(t)=\frac\gamma t$ (see \cite{garra2}). 
The projection of \eqref{eq:bsolint} on the one-dimensional space has instead density function which is also governed by the EPD equation \eqref{eq:epd}. The conditional probability density functions of the position of the random flights (when the number of changes of direction is fixed) are solutions of EPD equations with suitable coefficients.

In this paper we deal with also the fractional version of the EPD equation 
\begin{equation}\label{eq:fracepd}
\frac{\partial^2 u_\nu}{\partial t^2}+\frac{2\gamma+d-1}{t}\frac{\partial u_\nu}{\partial t }=-c^2(-\Delta)^{\nu/2} u_\nu,\quad \gamma >0,\nu\in(0,2],
\end{equation}
where $u_\nu:=u_{\nu}(\xi_d,t)$ and $-(-\Delta)^{\nu/2}$ is a pseudo-differential operator called fractional Laplace operator, defined as follows for $f$ belonging on the class of rapidly decreasing functions
\begin{equation*}
(-\Delta)^{\nu/2} f(\xd):=\frac{1}{(2\pi)^d}\int_{\mathbb R^d} e^{-i\langle \xi_d,\xd\rangle}||\xi_d||^\nu\hat f(\xi_d)\de \xi_d,
\end{equation*}
where $\hat f$ is the Fourier transform of $f$ and ${\bf \xi}_d\in \mathbb R^d$ (see \cite{kw} for various equivalent definitions of the fractional Laplacian).  
In \cite{ot} the space-time generalized fractional telegraph equation is analyzed and its solution interpreted as a time-changed isotropic stable process. The random time is represented as the inverse of a suitable combination of independent stable subordinators.

Let us consider
\begin{align}\label{eq:anti1}
p_1^\nu(\xd,w)&=\frac{1}{(2\pi)^d}\int_{\mathbb R^d} e^{-i\langle \xi_d,\xd\rangle}e^{i ||\xi_d||^{\nu/2}w}\hat\varphi(\xi_d)\de\xi_d
\end{align}
and
\begin{align}\label{eq:anti2}
p_2^\nu(\xd,w)&=\frac{1}{(2\pi)^d}\int_{\mathbb R^d} e^{-i\langle \xi_d,\xd\rangle}e^{-i ||\xi_d||^{\nu/2}w}\hat\varphi(\xi_d)\de\xi_d
\end{align}
where $\varphi$ represents a sufficiently regular  function.
We obtain that the solution of the Cauchy problem associated to the fractional EPD equation is given by
\begin{equation}\label{eq:solfepd}
u_\nu(\xd,t)=\int_{-ct}^{ct}g(w,t) \left(\frac{p_1^\nu(\xd,w)+p_2^\nu(\xd,w)}{2}\right)\de w
\end{equation}
where
$$g(w,t)=\frac{\Gamma(\gamma+\frac d2)}{\sqrt{\pi}\Gamma(\frac d2+\gamma-\frac12)ct}\left(1-\frac{w^2}{c^2t^2}\right)_+^{\frac d2+\gamma-\frac12-1}$$
satisfies \eqref{eq:epd0} in the one-dimensional case. The functions \eqref{eq:anti1} and \eqref{eq:anti2} are solutions of fractional Schr\"odinger-type equations treated in the papers \cite{isf} and \cite{isf2} for $\nu=2,$ where the authors constructed probabilistically based solutions. Furthermore, we show that the solution of the fractional EPD equation satisfies the following equation \begin{align*}
\frac{\partial u_\nu (\xd,t^\beta)}{\partial t}
=-(-\Delta)^{\nu/2}\int_{-\frac{t^{\beta}}{\sqrt B}}^{\frac{t^{\beta}}{\sqrt B}} ( g(w,t^\beta) )^m\left( \frac{  p_1^\nu(\xd,w)+  p_2^\nu(\xd,w)}{2}\right)\de w.
\end{align*}

For the case of the higher-order EPD equations $\frac{\partial^2 u}{\partial t^2}+\frac{2\lambda}{t}\frac{\partial u}{\partial t}=c_n\frac{\partial^{2n}u}{\partial x^{2n}}
,\lambda >0,x\in\mathbb R, c_n>0,$ we study in the last part of the paper solutions obtained as compositions of laws of pseudoprocesses (see \cite{lachal}) with the source-type solutions of \eqref{eq:epd}.

\section{Preliminaries on the Porous Medium equation}

The standard $d-$dimensional PME is the following non-linear heat equation
\begin{equation}\label{eq:pmebis}
\frac{\partial u}{\partial t }=\Delta (u^m),\quad m>1, d\geq 1, t>0,
\end{equation}
where $u:=u(\xd,t)$ is a non-negative scalar function defined on the space $\mathbb R^d\times(0,\infty)$ with $u(\xd,0)=\delta(\xd)$ (which means $u(\xd,t)\to \delta(\xd)$ as $t\to0$). Equation \eqref{eq:pmebis} is usually adopted to model the flow of a gas through a porous medium. The PME emerges also in the study of fluid mechanics where it models the filtration of an incompressible fluid through a porous stratum. Another important application of the PME concerns the heat radiation in plasmas, developed by Zel’dovich and collaborators in the early Fiftees. Other applications have been proposed in mathematical biology.  See \cite{vazquez} for an overall presentation of the physical and mathematical background of \eqref{eq:pmebis}.

The Kompanets-Zel'dovich-Barenblatt solution (also called source-type solution since $u(\xd,0)=\delta(\xd)$) represents a special solution of \eqref{eq:pmebis} and has the form (see Appendix)

\begin{equation}\label{eq:bsolbis}
u(\xd,t)=Ct^{-\alpha}\left(1-B\frac{||\xd||^2}{t^{2\beta}}\right)_+^{\frac{1}{m-1}},
\end{equation}
where $(x)_+:=\max(0,x),$ $$\alpha:=\frac{d}{2+d(m-1)},\quad\beta:=\frac\alpha d=\frac{1}{2+d(m-1)},\quad B:=\frac{\alpha(m-1)}{2md}=\frac{m-1}{2m(2+d(m-1))}.$$ The normalizing constant $C$ is chosen in such a way that $$\int_{\re^d}u(\xd,t)\de \xd=\text{area}(\mathbb S_1^{d-1})\int_0^{t^{\beta}/\sqrt B}Ct^{-\alpha}\rho^{d-1}\left(1-B\frac{\rho^2}{t^{2\beta}}\right)^{\frac{1}{m-1}}\de\rho=1$$ and this yields
\begin{equation}
C:=\frac{\Gamma(\frac d2+\frac{m}{m-1})B^{\frac d2}}{\Gamma(\frac{m}{m-1})\pi^{\frac d2}}=\frac{\Gamma(\frac d2+\frac{m}{m-1})(\frac{m-1}{2m(2+d(m-1))})^{\frac d2}}{\Gamma(\frac{m}{m-1})\pi^{\frac d2}},
\end{equation}
where area$(\mathbb{S}_1^{d-1})=\frac{2\pi^{d/2}}{\Gamma(d/2)}.$ We observe that the solution considerably simplifies for $m=2$. For $m\to 1,$ \eqref{eq:bsolbis} becomes the Gaussian kernel $(4\pi t)^{-d/2}\exp\{-x^2/4t\}$ representing the fundamental solution to the heat equation $\frac{\partial u}{\partial t}=\Delta u.$

The PME has the property of finite speed of propagation of disturbances from the rest level $u = 0.$ We are able to explain this property as follows. If we take as initial data a density distribution given by a nonnegative, bounded and compactly supported function, the physical solution of the PME for these data is a continuous function $u(\xd,t)$ such that for any $t > 0$ the profile $u( \cdot, t )$ is still nonnegative, bounded and compactly supported. Hence, the support expands eventually to penetrate the whole space, but is bounded at any fixed time. Therefore, for fixed $t>0,$ the support of \eqref{eq:bsolbis} is given by the closed ball $$D=\{\xd\in\mathbb R^d:||\xd||^2\leq t^{2\beta}/B\},$$ while the free boundary (that is the set separating the region where the solution is positive) is given by the sphere $\mathbb S^{d-1}_{t^{\beta}/\sqrt B}=\{\xd\in\mathbb R^d:||\xd||^2= t^{2\beta}/B\}.$ This implies that the Kompanets-Zel'dovich-Barenblatt solution spreads in space as $t^\beta;$ that is the radius of its spherical support increases as $t^\beta.$

The finite speed of propagation for the PME is in contrast with the infinite speed of propagation of the classical heat equation; that is a nonnegative solution of the heat equation is positive everywhere in $\mathbb R^d$.

The solution \eqref{eq:bsolbis} to the PME equation satisfies the following autosimilarity relationship
\begin{align*}\label{eq:aut}
u(\xd,t)=A^{\alpha}u(A^\beta||\xd||,At).
\end{align*}
for all real positive numbers $A.$

\begin{remark}
For $d=1,$ also the wave equation has a non-linear counterpart of the form
 \begin{equation*}
\frac{\partial^2 u}{\partial t^2 }=\frac{\partial^2}{\partial x^2} (u^m),\quad m>1, t>0,
\end{equation*}
which admits solutions 
$$u(x,t)=\left(\frac{|x|+B}{\sqrt mt+D}\right)^{\frac{2}{m-1}},\quad  x\in\mathbb R\setminus\{0\}$$
where $B$ and $D$ are arbitrary constants. This statement can be checked by easy calculations.
\end{remark}

\section{Stochastic processes related to the PME}

In this section we discuss the connection between the solutions of PME equation and random flights representing random motions with finite velocity. Therefore, we start by introducing these class of stochastic processes.

A random flight is a random motion in $\mathbb R^d$ described by a particle starting at the origin with a randomly chosen direction and with speed $c>0.$ The direction of the particle changes at each collision with some scattered obstacles where a new orientation of motion is taken. For $d\geq 2,$ all the directions are independent and have the same probability distribution. The directions are chosen uniformly on the unit-radius sphere $\mathbb S_1^{d-1}:=\{\xd\in\mathbb R^d:||\xd||=1\};$ that is, for $d\geq 3$ they possess density
\begin{equation}\label{eq:unidist}
p(\theta_1,...,\theta_{d-2},\phi):=\frac{\sin^{d-2}\theta_1\sin^{d-3}\theta_2\cdots \sin\theta_{d-2}\cos\phi}{\text{area}(\mathbb{S}_1^{d-1})},
\end{equation}
where $0\leq \phi\leq 2\pi,0\leq \theta_j\leq \pi, j=1,...,d-2.$ For $d=2,$ \eqref{eq:unidist} reduces to $p(\phi)=\frac{1}{2\pi},$ while for $d=1$ we have two possible directions alternatively taken by the moving particle.

Let $(T_k,k\in\bb N_0)$ be the sequence of the instants where the random flight changes direction with $T_0:=0.$ For the case where the times $T_k$ are governed by a homogeneous Poisson process $(N(t))_{t\geq 0},$ we have that the intertimes $\tau_{k+1}:=T_{k+1}-T_k,$ with $\tau_0:=0$ and $\tau_{n+1}:=t-\sum_{j=1}^n\tau_j,$ have joint conditional distribution given by (see, e.g., \cite{OrsDG})
$$f_1(t_1,...,t_n)=\frac{P(\tau_1\in \de t_1,...,\tau_n\in\de t_n|N(t)=n)}{\de t_1\cdots \de t_n}=\frac{n!}{t^n}\ 1_{S_n}(t_1,...,t_n),$$
where $$S_n:=\left\{(t_1,...,t_n): 0<t_j<t-\sum_{k=0}^{j-1}t_k, 1\leq j\leq n, t_0=0, t_{n+1}=t-\sum_{j=1}^nt_j\right\}.$$ For the case where the random vector $(\tau_1,...,\tau_n)$ has conditional density function
\begin{equation}\label{eq:jointdis2}
f_2(t_1,...,t_n)=\frac{\Gamma((n+1)(d-1))}{(\Gamma(d-1))^{n+1}}\frac{\prod_{j=1}^{n+1}t_j^{d-2}}{t^{(n+1)(d-1)-1}}1_{S_n}(t_1,...,t_n),\quad \text{for}\quad d\geq 2,
\end{equation}
or
\begin{equation}\label{eq:jointdis2bis}
f_3(t_1,...,t_n)=\frac{\Gamma((n+1)(\frac d2-1))}{(\Gamma(\frac d2-1))^{n+1}}\frac{\prod_{j=1}^{n+1}t_j^{\frac d2-2}}{t^{(n+1)(\frac d2-1)-1}}1_{S_n}(t_1,...,t_n), \quad \text{for}\quad  d\geq 3,
\end{equation}
we have different random flights where for all values of $n$ and for all Euclidean spaces $\mathbb R^d$ with $d\geq 2$ or $d\geq 3,$ we know (see \cite{lecaer} and \cite{dgo}) the explicit form of the conditional probability distribution of the position reached by the particle.
The distributions \eqref{eq:jointdis2} and  \eqref{eq:jointdis2bis} are rescaled Dirichlet distributions, with parameters $(d-1,...,d-1),\, d\geq 2,$ and $(\frac d2-1,...,\frac d2-1),\, d\geq 3,$ respectively.

By assuming that in the interval $[0,t]$ the motion has changed $n\in \mathbb N$ times its direction, we can describe mathematically the position ${\bf X}^n_d(t):=(X_1^n(t),...,X_d^n(t))$ reached by the particle at time $t>0.$  For $d=1,$ we have the classical telegraph process given by
\begin{equation}\label{eq:tel}
X_1^n(t):= c\sum_{k=0}^{ n} V_0 (-1)^k\tau_{k+1},
\end{equation}
where $V_0$ is a r.v. with distribution $P(V_0=+1)=P(V_0=-1)=\frac12.$ The intertimes $\tau_k$s have joint probability law $f_1.$

For $d> 1,$ the random vector ${\bf X}^n_d(t)$ has components
\begin{equation}\label{eq:definitionaddfun}
\begin{split}
&X_d^n(t)=c\sum_{k=0}^{n}\tau_{k+1}
\sin\theta_{1,k}\sin\theta_{2,k}\cdot\cdot\cdot\sin\theta_{d-2,k}\sin\phi_{k}\\
&X_{d-1}^n(t)=c\sum_{k=0}^{n}\tau_{k+1}
\sin\theta_{1,k}\sin\theta_{2,k}\cdot\cdot\cdot\sin\theta_{d-2,k}\cos\phi_{k}\\
&\cdot\cdot\cdot\\
&X_2^n(t)=c\sum_{k=0}^{n}\tau_{k+1}
\sin\theta_{1,k}\cos\theta_{2,k}\\
&X_1^n(t)=c\sum_{k=0}^{n}\tau_{k+1}\cos\theta_{1,k},
\end{split}
\end{equation}
where $c>0$ represents the velocity intensity of the motion, the r.v. $(\theta_{1,k},....,\theta_{d-2,k},\phi_k,), k=0,1,...,n,$ has density \eqref{eq:unidist}, while $\tau_k$s have joint density function \eqref{eq:jointdis2}, for $d\geq 2,$ or $\eqref{eq:jointdis2bis},$ for $d\geq 3.$

Fixed $t>0$ as well as the number of changes of velocity $n\in \mathbb N,$  the telegraph process \eqref{eq:tel} admits conditional density given by (see \cite{dgos})
\begin{equation}\label{eq:disttp}
\frac{P( X_1^n(t)\in \de x_1)}{\de x_1}=\begin{cases}
\frac{\Gamma(n+1)}{(\Gamma(\frac{n+1}{2}))^22^{n}ct}\left(1-\frac{x_1^2}{c^2t^2}\right)_+^\frac{n-1}{2},& n\, \text{odd},\\
\frac{\Gamma(n+1)}{\Gamma(\frac n2+1)\Gamma(\frac n2)2^{n}ct}\left(1-\frac{x_1^2}{c^2t^2}\right)_+^{\frac n2-1},&  n\, \text{even}.
\end{cases}
\end{equation}
We observe that for $n$ odd, we have that $$P( X_1^n(t)\in \de x_1)=P(X_1^{n+1}(t)\in \de x_1).$$ 
It is worth to mention that the results \eqref{eq:disttp} could be obtained by taking into account the same approach in \cite{lp} based on the Stieltjes transform.

For $t>0,$ under the assumptions \eqref{eq:jointdis2} and \eqref{eq:jointdis2bis}, \cite{lecaer} and \cite{dgo} obtained the explicit density functions of the random flights ${\bf X}_d^n(t)$ given the number of changes of direction; that is 
\begin{equation}\label{eq:distrf}
\frac{P({\bf X}_d^n(t)\in \de \xd)}{\de \xd}=\begin{cases}
\frac{\Gamma(\frac{n+1}{2}(d-1)+\frac12)}{\Gamma(\frac{n}{2}(d-1))\pi^{\frac d2}(ct)^d}\left(1-\frac{||\xd||^2}{c^2t^2}\right)_+^{\frac n2(d-1)-1},&\text{if \eqref{eq:jointdis2} holds} , \\
\frac{\Gamma((n+1)(\frac d2-1)+1)}{\Gamma(n(\frac d2-1))\pi^{\frac d2}(ct)^d}\left(1-\frac{||\xd||^2}{c^2t^2}\right)_+^{n(\frac d2-1)-1},&\text{if \eqref{eq:jointdis2bis} holds}.
\end{cases}
\end{equation}

Now, we are able to relate the PME equation with a time rescaled version of the conditioned random flight processes $({\bf X}_d^n(t), t\geq 0)$ when some constraints on the parameters hold true. More precisely, in the next theorem, we establish relationships between the degree $m$ of the PME and the number $n$ of changes of direction of the random flights described above and the dimension $d$ of the Euclidean space where the random flights develop.

\begin{theorem}\label{teo:sppme}
The random flights $({\bf Y}_d^n(t),t\geq 0),$ where ${\bf Y}_d^n(t):= {\bf X}_d^n(t^\beta),$ admit probability density function, for a fixed time $t>0$ and $n\in\mathbb N,$ given by
\begin{equation}\label{eq:rescrfpdf}
\frac{P({\bf Y}_d^n(t)\in \de\xd)}{\de \xd}=u(\xd,t) ,
\end{equation}
where $u(\xd,t)$ is the solution \eqref{eq:bsolbis}. 
The equality \eqref{eq:rescrfpdf} is true, when the following relationships between the number $n$ of velocity changes of the random motion and the parameter $m$ of the PME fulfill:
\begin{itemize}
\item[(i)] for $d=1,$ $$m=\begin{cases}
\frac{n+1}{n-1}=1+\frac1k,& n=2k+1,\\
\frac{n}{n-2}=1+\frac{1}{k},& n=2k+2,
\end{cases}\quad k\geq 1;$$
\item[(ii)] when \eqref{eq:jointdis2} holds, $m=\frac{n(d-1)}{n(d-1)-2}$ with $d>\frac2n+1;$
\item[(iii)] when \eqref{eq:jointdis2bis} holds, $m=\frac{n(d-2)}{n(d-2)-2}$ with $d>\frac2n+2$.
\end{itemize}

\end{theorem}

\begin{proof}

For $d=1,$ we deal with a telegraph process defined by \eqref{eq:definitionaddfun} with time scale $t'=t^\beta$ and speed $c'=1/\sqrt B.$ By exploiting the duplication formula for the Gamma function we can write the solution \eqref{eq:fundsol2} for $d=1$ as follows
\begin{equation}\label{eq:epd1}
u(x_1,t')=\frac{\Gamma(\frac{2m}{m-1})2^{1-\frac{2m}{m-1}}}{(\Gamma(\frac{m}{m-1}))^2}\frac{1}{c't'}\left(1-\frac{x_1^2}{(c' t')^2}\right)_+^{\frac{1}{m-1}}.
\end{equation}
For
\begin{equation*}
\frac{1}{m-1}=\frac{n-1}{2}, \quad \text{that is}\quad
 m=\frac{n+1}{n-1}, 
\end{equation*}
the solution \eqref{eq:bsolbis} coincides with the first one of  \eqref{eq:disttp}, while for 
\begin{equation*}
\frac{1}{m-1}=\frac{n}{2}-1, \quad \text{that is}\quad
 m=\frac{n}{n-2}, 
\end{equation*}
the solution \eqref{eq:bsolbis} coincides with the second one of  \eqref{eq:disttp}.
 For $n>2,$ in both cases $1<m<\infty.$

Now,
let us consider a random flight defined in $\mathbb R^d, d\geq 2,$ by \eqref{eq:definitionaddfun} with time scale $t'=t^\beta$ and speed $c'=1/\sqrt B.$ Under the assumption \eqref{eq:jointdis2}, for
\begin{equation*}
\frac{m}{m-1}=\frac n2(d-1), \quad \text{that is}\quad m=\frac{n(d-1)}{n(d-1)-2}, 
\end{equation*}
the function \eqref{eq:fundsol2} coincides with the first one of \eqref{eq:distrf}. Since $m\in(1,\infty),$ we infer that
\begin{equation}\label{eq:ineq}
d>\frac2n+1.
\end{equation}
For $d=2$ the inequality \eqref{eq:ineq} holds for $n\geq 3;$ for $d=3,$ it holds for $n\geq 2;$ for $d>3,$ \eqref{eq:ineq} holds for all $n\geq 1.$
Therefore, under the condition  \eqref{eq:ineq} we can write
\begin{equation*}
P({\bf X}_d^n(t')\in \de	\xd)= u(\xd,t') \de \xd.
\end{equation*}
Analogously, under the assumption \eqref{eq:jointdis2bis}, for
\begin{equation*}
\frac{m}{m-1}= n\left(\frac d2-1\right), \quad \text{that is}\quad m=\frac{n(d-2)}{n(d-2)-2}, 
\end{equation*}
the function \eqref{eq:fundsol2} coincides with the second one of \eqref{eq:distrf}. Since $m\in(1,\infty),$ we infer that
\begin{equation}\label{eq:ineq2}
d>\frac2n+2.
\end{equation}
For $d=3$ the inequality \eqref{eq:ineq} holds for $n\geq 3;$ for $d=4,$ it holds for $n\geq 2;$ for $d>4,$ \eqref{eq:ineq} holds for all $n\geq 1.$
Therefore, under the condition  \eqref{eq:ineq2} it turns out that
\begin{equation*}
P({\bf X}_d^n(t')\in \de	\xd)= u(\xd,t') \de \xd.
\end{equation*}
\end{proof}
\begin{remark}\label{inv}
From Theorem \ref{teo:sppme} follows that $({\bf Y}_d^n(t),t\geq 0)$ is rotationally invariant; that is if $O(d)$ is the group of $d\times d$ orthogonal matrices acting in $\mathbb R^d,$ we have that $u(M^T\xd,t)=u(\xd,t)=u(||\xd||,t),$
where $M\in O(d).$ 
\end{remark}

\begin{remark}\label{rem:sde}
The study of stochastic processes associated with the PME has been also developed in \cite{inoue}, \cite{inoue2}, \cite{inoue3} and \cite{ben}. They proved that there exists a non-linear diffusion process $\{({\bf Z}_d(t)=(Z_1(t),...,Z_d(t))),t\geq 0), P\}$ satisfying the following system of stochastic differential equations
\begin{equation}
Z_j(t)=\int_0^t u(Z_j(s),s)^{(m-1)/2} \de B_j(t),\quad j=1,2,...,d,
\end{equation}
where $(B_1(t),...,B_d(t))$ is a standard $d$-dimensional Brownian motion such that
$$P({\bf Z}_d(t)\in\de\xd)=u(\xd,t)\de\xd,$$
with $u(\xd,t)$ given by \eqref{eq:bsolbis}.
\end{remark}
The following result concerning the Fourier transform of $u(\xd,t)$ has also been proved in \cite{biler}.

\begin{theorem}\label{propcf}
The Fourier transform of the probability law $u(\xd,t)$ given by \eqref{eq:bsolbis}, denoted by $\hat u(\xi_d,t),$ is equal to
\begin{equation}\label{eq:cfbsol}
\hat u(\xi_d,t)=\left(\frac{2\sqrt B}{t^\beta||\xi_d||}\right)^{\frac{d}{2\alpha(m-1)}}\Gamma\left(\frac d2+\frac{m}{m-1}\right)J_{\frac{d}{2\alpha(m-1)}}\left(\frac{||\xi_d||t^{\beta}}{\sqrt B}\right),
\end{equation}
where $\xi_d\in\mathbb R^d, \alpha=\frac{d}{2+d(m-1)}, \beta=\frac{1}{2+d(m-1)}$ and $J_\mu(x)=\sum_{k=0}^\infty (-1)^k\frac{(x/2)^{2k+\mu}}{k!\Gamma(k+\mu+1)}$, with $\mu\in \mathbb R,$ is the Bessel function. 
\end{theorem}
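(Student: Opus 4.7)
The plan is to exploit the rotational invariance of $u(\xd,t)$ (noted in Remark \ref{inv}), reduce the $d$-dimensional Fourier integral to a single radial integral of the Bessel-transform type, and then collapse the resulting integral by means of a Sonine-type identity.

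First I would write the Fourier transform of a radial function in $\mathbb{R}^d$ in the standard Bessel form:
\begin{equation*}
\hat u(\xi_d,t)=(2\pi)^{d/2}\,\|\xi_d\|^{1-d/2}\int_0^{t^\beta/\sqrt B}\rho^{d/2}J_{d/2-1}(\|\xi_d\|\rho)\,Ct^{-\alpha}\Bigl(1-B\rho^2/t^{2\beta}\Bigr)^{1/(m-1)}\de\rho,
\end{equation*}
using that $u(\xd,t)$ is supported on the ball of radius $t^\beta/\sqrt B$ (see the discussion after \eqref{eq:bsolbis}). Then I would perform the change of variable $\rho=(t^\beta/\sqrt B)\,s$, which pulls out the powers of $t^\beta/\sqrt B$ and turns the integral into
\begin{equation*}
\int_0^1 s^{d/2}(1-s^2)^{1/(m-1)}J_{d/2-1}(a s)\,\de s,\qquad a:=\|\xi_d\|\,t^\beta/\sqrt B.
\end{equation*}

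Next I would invoke Sonine's first finite integral, which in the form relevant here reads
\begin{equation*}
\int_0^1 s^{\mu+1}(1-s^2)^{\nu}J_\mu(as)\,\de s=\frac{2^\nu\Gamma(\nu+1)}{a^{\nu+1}}\,J_{\mu+\nu+1}(a),
\end{equation*}
applied with $\mu=d/2-1$ and $\nu=1/(m-1)$. The index of the output Bessel function is $\mu+\nu+1=d/2+1/(m-1)$, and a direct check shows that this coincides with $d/[2\alpha(m-1)]$, matching the order appearing in \eqref{eq:cfbsol}; this is the first internal consistency check I would perform.

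The remaining work is algebraic bookkeeping. Substituting the explicit normalizing constant
$$C=\frac{\Gamma(d/2+m/(m-1))\,B^{d/2}}{\Gamma(m/(m-1))\,\pi^{d/2}},$$
I would collect the powers of $\|\xi_d\|$, $t^\beta$, $\sqrt B$, and of $2$, verifying separately that each reproduces $(2\sqrt B/(t^\beta\|\xi_d\|))^{d/[2\alpha(m-1)]}$; the $\Gamma(m/(m-1))$ in $C$ cancels against the factor $\Gamma(\nu+1)=\Gamma(m/(m-1))$ from Sonine, leaving the single Gamma factor $\Gamma(d/2+m/(m-1))$. The main obstacle is purely computational, namely to verify that the exponent of $B$ from the change of variable plus the $B^{d/2}$ inside $C$ combine to give precisely $B^{\mu^*/2}$ with $\mu^*=d/[2\alpha(m-1)]$; the identities $\alpha=\beta d$ and $2\alpha(m-1)=d(m-1)/(2+d(m-1))$ make this transparent.
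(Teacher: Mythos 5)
Your proposal is correct and follows essentially the same route as the paper: reduce the radial Fourier integral via $\int_{\mathbb S_1^{d-1}}e^{i\rho\langle \xi_d,\theta_d\rangle}\,\de\sigma(\theta_d)=(2\pi)^{d/2}J_{d/2-1}(\rho\|\xi_d\|)/(\rho\|\xi_d\|)^{d/2-1}$, rescale, and apply Sonine's first finite integral (Gradshteyn--Ryzhik 6.567(1)), with the index check $d/2+1/(m-1)=d/[2\alpha(m-1)]$ done exactly as you describe. The only blemish is the parenthetical identity at the end, which should read $2\alpha(m-1)=2d(m-1)/(2+d(m-1))$; this does not affect the argument.
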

\begin{proof}
Let $\sigma$ be the uniform measure on $\mathbb S_{1}^{d-1}.$ We recall that (see (2.12), pag.690, \cite{dgo}),  
\begin{equation}\label{eq:int}
 \int_{\mathbb S_1^{d-1}}e^{i\rho\langle \xi_d,\theta_d\rangle} \de \sigma({\bf \theta}_d)=(2\pi)^{d/2}\frac{J_{\frac d2-1}(\rho||\xi_d||)}{(\rho||\xi_d||)^{\frac d2-1}}
\end{equation}
One has that
\begin{align*}
\hat u(\xi_d,t)&=\int_{\mathbb R^d}e^{i\langle\xi_d,\xd\rangle}u({\bf x}_d,t)\de {\bf x}_d\\
&=(\text{by Remark \ref{inv}})\\
&=\int_0^{\frac{t^{\beta}}{\sqrt B}}\rho^{d-1} Ct^{-\alpha} \left(1-\frac{B\rho^2}{t^{2\beta}}\right)^{\frac{1}{m-1}}\de \rho \int_{\mathbb S_1^{d-1}}e^{i\rho\langle \xi_d,\theta_d\rangle} \de \sigma({\bf \theta}_d)\\
&=(\text{by}\, \eqref{eq:int})\\
&=(2\pi)^{d/2}\int_0^{\frac{t^\beta}{\sqrt B}}\rho^{d-1} Ct^{-\alpha} \left(1-\frac{B\rho^2}{t^{2\beta}}\right)^{\frac{1}{m-1}}\frac{J_{\frac d2-1}(\rho||\xi_d||)}{(\rho||\xi_d||)^{\frac d2-1}}\de \rho\\
&=\frac{(2\pi)^{d/2} Ct^{-\alpha+\beta(\frac d2+1)}}{(\sqrt B)^{\frac d2+1}||\xi_d||^{\frac d2-1}}\int_0^1(1-w^2)^{\frac{1}{m-1}} w^{d/2}J_{d/2-1}\left(\frac{||\xi_d||t^{\beta}}{\sqrt B}w\right)\de w.
\end{align*}
In view of formula 6.567(1) of page 688 of \cite{gr}
\begin{equation}\label{eq:gr}
\int_0^1x^{\nu+1}(1-x^2)^\mu J_\nu(bx)\de x=2^\mu\Gamma(\mu+1)b^{-(\mu+1)}J_{\nu+\mu+1}(b)
\end{equation}
where $b>0,$ Re$\nu>-1,$ Re$\mu>-1,$ we obtain \eqref{eq:cfbsol}.
\end{proof}

The previous discussion entails that \eqref{eq:cfbsol} is the characteristic function of $({\bf Y}_d^n(t),t\geq 0)$ under the conditions of Theorem \ref{teo:sppme}.

\begin{proposition}
For $d=1,$ the $p$-th moment of $({\bf Y}_1^n(t),t\geq 0)$ (under the condition $(i)$ of Theorem \ref{teo:sppme} relating $m$ and $n$) is equal to
\begin{equation}\label{eq:mom}
E[{\bf Y}_1^n(t)]^p=
\begin{cases}
0,& p\, \text{odd},\\
\frac{\Gamma(\frac{p+1}{2})\Gamma(\frac12+\frac{m}{m-1})}{\sqrt\pi \Gamma(\frac{p+1}{2}+\frac{m}{m-1})}\left(\frac{t^\alpha}{\sqrt B}\right)^{p},& p\, \text{even}.
\end{cases}
\end{equation}
with $\alpha=\frac{1}{1+m}$ and $B=\frac{m-1}{2m(m+1)}.$
\end{proposition}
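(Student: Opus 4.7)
The plan is to compute the moment directly from the density $u(x_1,t)$ given in \eqref{eq:bsolbis}, specializing to $d=1$ where Theorem \ref{teo:sppme} ensures $P({\bf Y}_1^n(t)\in\de x_1)=u(x_1,t)\de x_1$. For $p$ odd, the integrand $x_1^p\, u(x_1,t)$ is an odd function integrated over the symmetric interval $[-t^\beta/\sqrt B,\,t^\beta/\sqrt B]$, so the moment vanishes by symmetry (this also follows from the rotational invariance in Remark \ref{inv}, which for $d=1$ reduces to $x_1\mapsto -x_1$ invariance).

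For $p$ even, I would begin from
\begin{equation*}
E[{\bf Y}_1^n(t)]^p=Ct^{-\alpha}\int_{-t^\beta/\sqrt B}^{t^\beta/\sqrt B} x_1^p\left(1-B\frac{x_1^2}{t^{2\beta}}\right)^{\frac{1}{m-1}}\de x_1
\end{equation*}
and apply the substitution $w=\sqrt B\, x_1/t^\beta$, which produces the factor $(t^\beta/\sqrt B)^{p+1}$ outside the integral and leaves
\begin{equation*}
2Ct^{-\alpha+\beta(p+1)}B^{-(p+1)/2}\int_0^1 w^p(1-w^2)^{\frac{1}{m-1}}\de w.
\end{equation*}
A further substitution $v=w^2$ turns the remaining integral into a Beta integral $\tfrac12 B\!\left(\tfrac{p+1}{2},\tfrac{m}{m-1}+1\right)$, which I would rewrite via Gamma functions as $\tfrac12\Gamma(\tfrac{p+1}{2})\Gamma(\tfrac{m}{m-1})/\Gamma(\tfrac{p+1}{2}+\tfrac{m}{m-1})$.

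To finish, I would use $\beta=\alpha$ (valid for $d=1$) so that $-\alpha+\beta(p+1)=\alpha p$, and substitute the explicit value
\begin{equation*}
C=\frac{\Gamma(\tfrac12+\tfrac{m}{m-1})\sqrt B}{\sqrt\pi\,\Gamma(\tfrac{m}{m-1})}
\end{equation*}
from the general $C$ formula with $d=1$. The factor $\Gamma(\tfrac{m}{m-1})$ from $C$ cancels the one from the Beta integral, the power $B^{1/2}\cdot B^{-(p+1)/2}=B^{-p/2}$ combines with $t^{\alpha p}$ to give $(t^\alpha/\sqrt B)^p$, and the remaining Gamma factors reassemble into the claimed expression \eqref{eq:mom}.

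No step presents a serious obstacle; the only place to be careful is the arithmetic linking $\alpha,\beta$ and the exponent of $t$ after the substitution, together with correctly identifying the Beta-function parameters so that the $\Gamma(\tfrac{m}{m-1})$ in $C$ and in the Beta integral cancel cleanly. The identification $\alpha=1/(m+1)$ and $B=(m-1)/(2m(m+1))$ in the one-dimensional setting follows from the general definitions in \eqref{eq:bsolbis} specialized to $d=1$, so no new computation is needed for those.
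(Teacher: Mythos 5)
Your proposal is correct and follows exactly the paper's route: the paper's proof consists of writing down the same integral $Ct^{-\alpha}\int_{-t^\beta/\sqrt B}^{t^\beta/\sqrt B} x_1^p(1-Bx_1^2/t^{2\beta})^{1/(m-1)}\,\de x_1$ and declaring that ``simple calculations'' give \eqref{eq:mom}, and your substitutions supply precisely those calculations, arriving at the stated Gamma-function form. The only blemish is a notational slip in labelling the Beta integral as $B\bigl(\tfrac{p+1}{2},\tfrac{m}{m-1}+1\bigr)$ where the second parameter should be $\tfrac{1}{m-1}+1=\tfrac{m}{m-1}$; the Gamma-quotient you write immediately afterwards is the correct one, so the final result is unaffected.
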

\begin{proof}
We have to compute
\begin{align}
E[{\bf Y}_1^n(t)]^p=Ct^{-\alpha}\int_{-t^\beta/\sqrt B}^{t^\beta/\sqrt B} x_1^p\left(1-B\frac{x_1^2}{t^{2\beta}}\right)^{\frac{1}{m-1}}\de x_1
\end{align}
Simple calculations lead to \eqref{eq:mom}.
\end{proof}
From the above proposition we derive 
\begin{equation}\label{eq:var}Var[{\bf Y}_1^n(t)]=\frac{2m(m+1)}{3m-1}t^{\frac{2}{1+m}}=\frac{2n(n+1)}{(n-1)(n+2)}t^{\frac{n-1}{n}}.\end{equation}
Since $\frac{n-1}{n}<1,$ from \eqref{eq:var} we can conclude that the process $({\bf Y}_1^n(t),t\geq 0)$ spreads like a sub-diffusion.

In Theorem 3 in \cite{dgo} the moments of order $p$ of $(||{\bf Y}_d^n(t)||,t\geq 0),$ with $d\geq 2,$ (with suitable changes of the parameters) have been evaluated; then also the multidimensional random flights  $({\bf Y}_d^n(t),t\geq 0)$ represent anomalous diffusions.

\section{Fractional EPD equation and related  solutions}
We now consider the space-fractional version of the EPD equation, that is
\begin{equation}\label{fracepd}
\begin{cases}
\left(\frac{\partial^2}{\partial t^2}+\frac{d+2\gamma-1}{t}\frac{\partial}{\partial t}\right)u_\nu=-c^2(-\Delta)^{\frac\nu 2} u_\nu,\\
u(\xd,0)=\varphi(\xd), \\
\left.\frac{\partial u(\xd,t)}{\partial t}\right|_{t=0}=0,
\end{cases}
\end{equation}
where $u_\nu:=u_{\nu}(\xd,t)$ and $0<\nu<2.$  We assume that $\varphi \in \mathcal S(\mathbb R^d)$ representing the Schwartz space of rapidly decreasing functions on $\mathbb R^d.$
The fractional Laplace operator $(-\Delta)^{\nu/2}$
 is a non-local  pseudo-differential operator, 
defined via Fourier multipliers as follows 
\begin{equation}\label{eq:deffraclap}
(-\Delta)^{\nu/2} f(\xd):=\frac{1}{(2\pi)^d}\int_{\mathbb R^d} e^{-i\langle \xi_d,\xd\rangle}||\xi_d||^\nu\hat f(\xi_d)\de \xi_d.
\end{equation}
Formally, let $L^p(\mathbb R^d),p\in[1,2];$ we say that $f\in$ Dom$((-\Delta)^{\nu/2},L^p(\mathbb R^d))$ whenever $f\in L^p(\mathbb R^d)$ and there is $(-\Delta)^{\nu/2} f(\xd)\in L^p(\mathbb R^d)$ such that  \eqref{eq:deffraclap} holds (see Definition 2.1 in \cite{kw}).
For $d=1$ the operator $(-\Delta)^{\nu/2}$ coincides with the Riesz fractional derivative
$$\frac{\partial ^\nu f(x)}{\partial |x|^\nu}=-\frac{1}{2\cos (\pi \nu/2)}\frac{1}{\Gamma(m-\nu)}\frac{\de^m}{\de x^m}\int_{\mathbb R}\frac{f(y)}{|x-y|^{\nu+1-m}}\de y,\quad m-1<\nu<m, m\in\mathbb N.$$ 
An alternative definition of the fractional Laplace operator is the following one.  Let $({\bf S}_\nu(t),t\geq 0),$ with $0<\nu<2,$ be an isotropic, $d$-dimensional, $\nu$-stable process with
\begin{equation}\label{eq:chstabsub}
E\left[e^{i\langle \xi_d,{\bf S}_\nu(t)\rangle}\right]=e^{-t||\xi_d||^{\nu}}.
\end{equation}
Let $(T_t f)(\xd)=E[f({\bf S}_\nu(t) +\xd)]$ be the semigroup associated to ${\bf S}_\nu(t), t\geq 0.$ The infinitesimal generator of the semigroup $T_t f$ is given by limit in norm, if it exists,  $\mathcal L f= \lim_{t\to 0^+}\frac{T_t f-f}{t},$ where $f\in$ Dom$(\mathcal L)$. It is well-known that $\mathcal L=-(-\Delta)^{\nu/2}$ and its domain becomes Dom$(\mathcal L)=\left\{f\in L^2(\mathbb R^d):\int_{\mathbb R^d}(1+||\xi_d||^\nu)||\hat f(\xi_d)||^2\de\xi_d<\infty\right\}$ (see, e.g., \cite{app}).

 The reader can consult \cite{kw} for a discussion on the definition of the fractional Laplace operator. Moreover, if $\nu_1>0,$ $\nu_2>0$ and $0<\nu_1+\nu_2\leq 2,$  for the fractional Laplacian the following semigroup property holds
\begin{equation}\label{eq:sem}
(-\Delta)^{\frac{\nu_1}{2}}(-\Delta)^{\frac{\nu_2}{2}}=(-\Delta)^{\frac{\nu_1}{2}+\frac{\nu_2}{2}}.
\end{equation}
Indeed, by definition \eqref{eq:deffraclap}, under the previous constraints on $\nu_1$ and $\nu_2,$ it is easy to verify that
\begin{equation*}
\int_{\mathbb R^d}e^{i\langle \xi_d,\xd\rangle}(-\Delta)^\frac{\nu_1}{2}(-\Delta)^\frac{\nu_2}{2}f(\xd)\de \xd=||\xi_d||^{\frac{\nu_1}{2}+\frac{\nu_2}{2}}\hat f(\xi_d),
\end{equation*}
and then the equality \eqref{eq:sem} is proved. For this reason, hereafter we adopt the definition \eqref{eq:deffraclap}.

\begin{remark}

 For $\nu=2,$ the equation \eqref{fracepd} coincides with $d$-dimensional EPD equation which is itself a special case of the multidimensional telegraph equation
\begin{equation}\label{eq:epd}
\frac{\partial^2 u}{\partial t^2}+2\lambda(t)\frac{\partial u}{\partial t}=c^2\Delta  u,
\end{equation}
with $\lambda(t)=\frac{d+2\gamma-1}{2t}.$ The previous equation has fundamental solution (see \cite{garra2})
\begin{equation}\label{eq:solepd}
p(\xd,t)=\frac{\Gamma(\gamma+\frac d2)}{\pi^{d/2}\Gamma(\gamma)}\frac{1}{(ct)^d}\left(1-\frac{||\xd||^2}{(ct)^2}\right)_+^{\gamma-1}
\end{equation}
where $\gamma>0.$ The EPD equation has different forms and emerged in different contexts (e.g. fluid dynamics and geometry). For $d=1,$ a probabilistic derivation of the EPD equation based on the integrated telegraph process where the reversals of velocity are paced by a non-homogeneous Poisson process with rate $\lambda(t)=\frac\gamma t$ is given in \cite{glu}.
By rescaling the time coordinate as follows  $$t':=t^\beta,$$
the solution \eqref{eq:bsolbis} of the PME coincides with \eqref{eq:solepd}. Indeed, in the frame  $(\xd,t'),$ the Kompanets-Zel'dovich-Barenblatt solution \eqref{eq:bsolbis} can be written as
\begin{equation}\label{eq:fundsol2}
u(\xd,t')=p(\xd,t')=\frac{\Gamma(\frac d2+\frac{m}{m-1})}{\Gamma(\frac{m}{m-1})\pi^{\frac d2}}\frac{1}{(c't')^{d}}\left(1-\frac{||\xd||^2}{(c' t')^2}\right)_+^{\frac{1}{m-1}},
\end{equation}
where
\begin{equation*}
c':=\frac{1}{\sqrt B}=\sqrt{\frac{2m(2+d(m-1))}{m-1}},
\end{equation*}
and solves
the EPD equation \eqref{eq:epd} with $\gamma=\frac{m}{m-1};$ that is
\begin{equation}\label{epd2}
\left(\frac{\partial^2}{\partial t'^2}+\frac{d+\frac{m+1}{m-1}}{t'}\frac{\partial}{\partial t'}\right)u(\xd,t')=c'^2\Delta u(\xd,t').
\end{equation}

We observe that in the original time variable $t=(t')^{1/\beta}$ the solution \eqref{eq:fundsol2}, that is \eqref{eq:bsolbis}, satisfies the EPD equation
\begin{equation*}
\left[\frac{\partial^2}{\partial t^2}+\frac\beta t\left(d+\frac{m+1}{m-1} -\beta(\beta-1)t^{2\beta-1}\right)\frac{\partial}{\partial t}\right]u(\xd,t)=\frac{\beta^2t^{2\beta-2}}{B}\Delta u(\xd,t).
\end{equation*}
\end{remark}
\begin{remark}
The  $k$-th one-dimensional marginals of \eqref{eq:fundsol2} have the form
\begin{equation}\label{eq:fundsolmarg}
p_k(x_k,t')=\frac{\Gamma(\frac d2+\frac{m}{m-1})}{\Gamma(\frac{m}{m-1}+\frac{d-1}{2})\pi^{\frac 12}}\frac{1}{c't'}\left(1-\frac{x_k^2}{(c' t')^2}\right)_+^{\frac{m}{m-1}+\frac{d-1}{2}},
\end{equation}
and satisfy the EPD equation
\begin{equation*}
\left(\frac{\partial^2}{\partial t'^2}+\frac{d+\frac{m+1}{m-1}}{t'}\frac{\partial}{\partial t'}\right)p_k(x_k,t')=c'^2\frac{\partial ^2p_k(x_k,t')}{\partial x_k^2}
\end{equation*}
(on this point the  reader can consult \cite{garra2}).
For $t'=t^\beta$ and $c'=\frac{1}{\sqrt B}$ the probability density \eqref{eq:fundsolmarg} coincides with the Barenblatt solution of the PME. Therefore, for $d=1$ we can interpret \eqref{eq:bsolbis} as the law of a time-rescaled telegraph process on the real line observed on $(0,t],$ where the particle alternates the two directions at epochs of a non-homogeneous Poisson process with rate $\lambda(t)=\frac{d+\frac{m+1}{m-1}}{2t^\beta}.$ Unlike the classical telegraph process the process with probability distribution \eqref{eq:fundsolmarg} has only the absolutely continuous component. It is important to underline that this interpretation holds also for all $m>1.$
\end{remark}


\begin{lemma}
If we deal with $\varphi({\bf x}_d)=\delta({\bf x}_d)$, the characteristic function of $u_{\nu}$ becomes
\begin{equation}\label{eq:ffsolepd}
\hat u_{\nu}(\xi_d,t)=\left(\frac{2}{ct||\xi_d||^{\nu/2}}\right)^{\gamma+\frac d2-1}\Gamma\left(\gamma+\frac d2\right)J_{\gamma+\frac d2-1}\left(ct||\xi_d||^{\nu/2}\right).
\end{equation}
\end{lemma}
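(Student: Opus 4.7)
The plan is to recognize that the ODE \eqref{eq:ffepd} is, up to a multiplicative change of scale in $t$, a Bessel equation, and to verify directly that the candidate on the right-hand side of \eqref{eq:ffsolepd} solves it with the correct behavior at $t=0$. Specifically, applying the spatial Fourier transform to \eqref{fracepd} turns $-(-\Delta)^{\nu/2}$ into multiplication by $-\|\xi_d\|^{\nu}$, and for each fixed $\xi_d$ we obtain a second-order linear ODE in $t$ with one regular singular point at $t=0$, namely \eqref{eq:ffepd}.

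To solve it, I would set $p:=\gamma+\tfrac{d}{2}-1$ and $a:=c\|\xi_d\|^{\nu/2}$, and work with $v(t):=(at)^{-p}J_p(at)$, so that the claimed right-hand side is $2^{p}\Gamma(p+1)\,v(t)$. Using the standard identity $\frac{d}{dx}[x^{-p}J_p(x)]=-x^{-p}J_{p+1}(x)$, I would compute
\begin{equation*}
v'(t)=-a\,(at)^{-p}J_{p+1}(at).
\end{equation*}
Applying the same identity together with the recurrence $J_{p+1}'(x)=J_p(x)-\tfrac{p+1}{x}J_{p+1}(x)$ to differentiate once more yields
\begin{equation*}
v''(t)=-a^{2}v(t)+\frac{a(2p+1)}{t}(at)^{-p}J_{p+1}(at)=-a^{2}v(t)-\frac{2p+1}{t}\,v'(t).
\end{equation*}
Since $2p+1=d+2\gamma-1$ and $a^{2}=c^{2}\|\xi_d\|^{\nu}$, this is exactly \eqref{eq:ffepd}, and the claimed expression inherits the same equation by linearity.

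For uniqueness, the ODE has a two-dimensional solution space spanned by the $J_p$-branch and the $Y_p$-branch (with a logarithmic modification when $p$ is an integer); the latter is singular as $t\to 0^{+}$, so the requirement that $\hat u_{\nu}(\xi_d,\cdot)$ be regular at the origin forces the $J_p$-branch. The multiplicative normalization $2^{p}\Gamma(p+1)$ is then fixed by $\hat u_{\nu}(\xi_d,0)=1$, which follows from the initial condition $u_\nu(\xd,0)=\delta(\xd)$: using the series $J_p(x)\sim(x/2)^{p}/\Gamma(p+1)$ as $x\to 0$, one gets $v(t)\to 2^{-p}/\Gamma(p+1)$, so that $2^{p}\Gamma(p+1)\,v(t)\to 1$, as required. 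No step is really an obstacle here; the only point that needs a little care is organizing the Bessel identities so that the coefficient $\tfrac{2p+1}{t}$ emerges cleanly, and justifying the selection of the regular branch from the natural initial conditions attached to the Cauchy problem for \eqref{fracepd}.
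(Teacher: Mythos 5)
Your argument is correct, but it takes a different route from the paper's. The paper proves the lemma by reduction to the classical case: it first computes (by the same Hankel-transform integral used in Theorem \ref{propcf}, i.e.\ formulas \eqref{eq:int} and \eqref{eq:gr}) the Fourier transform of the non-fractional EPD solution \eqref{eq:solepd}, obtaining $\bigl(\tfrac{2}{ct\|\xi_d\|}\bigr)^{\gamma+\frac d2-1}\Gamma(\gamma+\tfrac d2)J_{\gamma+\frac d2-1}(ct\|\xi_d\|)$, notes that this solves \eqref{eq:fepd}, and then observes that since \eqref{eq:ffepd} is obtained from \eqref{eq:fepd} by the replacement $\|\xi_d\|^2\mapsto\|\xi_d\|^{\nu}$, the substitution $\|\xi_d\|\mapsto\|\xi_d\|^{\nu/2}$ in the solution yields \eqref{eq:ffsolepd}. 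You instead verify directly, via the identity $\frac{d}{dx}[x^{-p}J_p(x)]=-x^{-p}J_{p+1}(x)$ and the recurrence for $J_{p+1}'$, that $v(t)=(at)^{-p}J_p(at)$ with $a=c\|\xi_d\|^{\nu/2}$ satisfies $v''+\frac{2p+1}{t}v'=-a^2v$, which is exactly \eqref{eq:ffepd}; your computation checks out ($2p+1=d+2\gamma-1$, $a^2=c^2\|\xi_d\|^\nu$). Your route is more self-contained (it does not presuppose the Fourier computation of Theorem \ref{propcf} in arbitrary dimension) and, unlike the paper, it addresses explicitly why this particular solution of the second-order ODE is the fundamental one, via the discarding of the singular $Y_p$-branch and the normalization $\hat u_\nu(\xi_d,0)=1$ — a point the paper leaves implicit. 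What the paper's approach buys in exchange is an explanation of where the formula comes from, namely that the fractional solution in Fourier variables is literally the classical one evaluated at $\|\xi_d\|^{\nu/2}$, which is the structural fact exploited in the rest of Section 4.
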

\begin{proof}
By means of the same arguments adopted in the proof of Theorem \ref{propcf}, we are able to prove that the Fourier transform of \eqref{eq:solepd} becomes
$$\hat u(\xi_d,t)=\left(\frac{2}{ct||\xi_d||}\right)^{\gamma+\frac d2-1}\Gamma\left(\gamma+\frac d2\right)J_{\gamma+\frac d2-1}\left(ct||\xi_d||\right).$$
The Fourier transform $\hat u:=\hat u(\xi_d,t)$ satisfies
\begin{equation}\label{eq:fepd}
\frac{\partial^2 \hat u}{\partial t^2}+\frac{d+2\gamma-1}{t}\frac{\partial \hat u}{\partial t}=-c^2||\xi_d||^2\hat u,
\end{equation}
while the Fourier transform  $\hat u_\nu:=\hat u_\nu(\xi_d,t)$ of the fundamental solution to \eqref{fracepd}, is solution to
\begin{equation}\label{eq:ffepd}
\frac{\partial^2 \hat u_\nu}{\partial t^2}+\frac{d+2\gamma-1}{t}\frac{\partial \hat u_\nu}{\partial t}=-c^2||\xi_d||^\nu \hat u_\nu.
\end{equation}

Since $\hat u(\xi_d,t)$ is solution to \eqref{eq:fepd}, it follows that \eqref{eq:ffsolepd} solves \eqref{eq:ffepd}.
\end{proof}

We are able to present two different forms of the inverse Fourier transform $u_\nu(\xd,t)$ of \eqref{eq:ffsolepd}.
\begin{theorem}
We have that the inverse Fourier transform of \eqref{eq:ffsolepd} becomes
\begin{align}\label{eq:fcf1}
u_\nu(\xd,t)=\left(\frac{2}{ct}\right)^\gamma\frac{\Gamma(\gamma+\frac d2)}{\nu\pi^{d/2}(ct||\xd||)^{\frac d2-1}}\int_0^\infty \rho^{d(\frac{2-\nu}{2\nu})+\frac2\nu-\gamma}J_{\gamma+\frac d2-1}\left(ct\rho\right)J_{\frac d2-1}\left(\rho^{2/\nu}||\xd||\right)\de\rho
\end{align}
or
\begin{align}\label{eq:fcf2}
u_\nu(\xd,t)&=\frac{2\Gamma(\gamma+\frac d2)}{\nu\pi^{d/2}(ct||\xd||)^{\frac d2-1}\Gamma(\gamma)}\\
&\quad\times\int_0^1 w^{\frac d2}(1-w^2)^{\gamma-1}\de w\int_0^\infty \rho^{d(\frac{2-\nu}{2\nu})+\frac2\nu}J_{\frac d2-1}\left(ct\rho w\right)J_{\frac d2-1}\left(\rho^{2/\nu}||\xd||\right)\de\rho\notag
\end{align}
\end{theorem}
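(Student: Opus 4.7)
The strategy is to Fourier-invert the explicit formula for $\hat u_\nu(\xi_d,t)$ provided by the preceding Lemma and then reshape the resulting one-dimensional radial integral into the two required forms. Since $\hat u_\nu(\xi_d,t)$ depends only on $\|\xi_d\|$, I would start by writing
$$u_\nu(\xd,t)=\frac{1}{(2\pi)^d}\int_{\mathbb R^d}e^{-i\langle\xi_d,\xd\rangle}\,\hat u_\nu(\xi_d,t)\,\de\xi_d$$
in spherical coordinates and evaluating the angular integral by means of identity \eqref{eq:int} (its sign in the exponent is immaterial because the uniform measure on $\mathbb S_1^{d-1}$ is invariant under $\theta_d\mapsto-\theta_d$), reducing the problem to the single radial integral
$$u_\nu(\xd,t)=\frac{1}{(2\pi)^{d/2}}\int_0^\infty \rho^{d-1}\,\hat u_\nu(\rho,t)\,\frac{J_{d/2-1}(\rho\|\xd\|)}{(\rho\|\xd\|)^{d/2-1}}\,\de\rho.$$

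Next, I would substitute $\hat u_\nu(\rho,t)=(2/(ct\rho^{\nu/2}))^{\gamma+d/2-1}\Gamma(\gamma+d/2)\,J_{\gamma+d/2-1}(ct\rho^{\nu/2})$ and perform the change of variable $s=\rho^{\nu/2}$, so that $\de\rho=(2/\nu)\,s^{2/\nu-1}\,\de s$. The arithmetic of the exponents of $s$ collapses to $(d+2)/\nu-d/2-\gamma=d(2-\nu)/(2\nu)+2/\nu-\gamma$, while the numerical prefactor simplifies via $2^{d/2}/(2\pi)^{d/2}=\pi^{-d/2}$ to precisely the constant displayed in \eqref{eq:fcf1}. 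This yields the first form.

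To obtain \eqref{eq:fcf2}, I would expand the Bessel function $J_{\gamma+d/2-1}(cts)$ inside the $s$-integral of \eqref{eq:fcf1} using formula \eqref{eq:gr} solved for $J_{\nu+\mu+1}$, with the parameter choice $\nu=d/2-1,\,\mu=\gamma-1$, which gives
$$J_{\gamma+d/2-1}(cts)=\frac{(cts)^\gamma}{2^{\gamma-1}\Gamma(\gamma)}\int_0^1 w^{d/2}(1-w^2)^{\gamma-1}J_{d/2-1}(ctsw)\,\de w.$$
The factor $(cts)^\gamma$ cancels the $s^{-\gamma}$ in the weight of \eqref{eq:fcf1}, the remaining constants recombine into $2\Gamma(\gamma+d/2)/(\nu\pi^{d/2}(ct\|\xd\|)^{d/2-1}\Gamma(\gamma))$, and an application of Fubini interchanging the $s$- and $w$-integrations delivers \eqref{eq:fcf2}.

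The only real technical obstacle is justifying Fubini and the radial change of variable, since the integrand ultimately involves the product of two oscillating Bessel functions and is not absolutely integrable for arbitrary $\gamma,\nu,d$; I would handle this by interpreting the inversion in the Schwartz-distributional framework (which is consistent with the paper's definition of $(-\Delta)^{\nu/2}$ on $\mathcal S(\mathbb R^d)$) or, equivalently, by restricting temporarily to parameters for which the standard asymptotics $J_\mu(z)\sim\sqrt{2/(\pi z)}\cos(z-\mu\pi/2-\pi/4)$ give enough decay at infinity and then extending by analytic continuation in $\gamma$ and $\nu$.
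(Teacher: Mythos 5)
Your proposal is correct and follows essentially the same route as the paper: Fourier inversion reduced to a radial integral via the spherical identity \eqref{eq:int}, the substitution $\rho\mapsto\rho^{\nu/2}$ to reach \eqref{eq:fcf1}, and the Sonine-type formula \eqref{eq:gr} (inverted to express $J_{\gamma+\frac d2-1}$ as an integral of $J_{\frac d2-1}$) to pass to \eqref{eq:fcf2}; the only cosmetic difference is that you apply \eqref{eq:gr} after the change of variable while the paper applies it before. Your closing remark on justifying Fubini and convergence is a point the paper silently omits.
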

\begin{proof}
We start from \eqref{eq:ffsolepd} and write (in view of formula \eqref{eq:int})
\begin{align*}
 u_\nu(\xd,t)&=\frac{1}{(2\pi)^d}\int_{\mathbb R^d}e^{-i\langle \xi_d,\xd\rangle}\hat u_\nu(\xi_d,t)\de \xi_d\\
&= \frac{1}{(2\pi)^d}\int_0^\infty\rho^{d-1} \left(\frac{2}{ct\rho^{\nu/2}}\right)^{\gamma+\frac d2-1}\Gamma\left(\gamma+\frac d2\right)J_{\gamma+\frac d2-1}\left(ct\rho^{\nu/2}\right)\de\rho\int_{\mathbb S_1^{d-1}}e^{-i\rho\langle \theta_d,\xd\rangle} \de \sigma({\bf \theta}_d)\\
&=(by\, \eqref{eq:int})\\
&=\frac{1}{(2\pi)^{d/2}}\int_0^\infty \rho^{d-1}\left(\frac{2}{ct\rho^{\nu/2}}\right)^{\gamma+\frac d2-1}\Gamma\left(\gamma+\frac d2\right)J_{\gamma+\frac d2-1}\left(ct\rho^{\nu/2}\right)\frac{J_{\frac d2-1}(\rho||\xd||)}{(\rho||\xd||)^{\frac d2-1}}\de\rho\\
&=\frac{1}{(2\pi)^{d/2}}\left(\frac{2}{ct}\right)^{\gamma+\frac d2-1}\frac{\Gamma\left(\gamma+\frac d2\right)}{||\xd||^{\frac d2-1}}\int_0^\infty\rho^{\frac d2-\frac\nu2(\frac d2-1+\gamma)}J_{\gamma+\frac d2-1}\left(ct\rho^{\nu/2}\right)J_{\frac d2-1}(\rho||\xd||)\de\rho
\end{align*}
By a simple change of variable in the above integral we obtain the result \eqref{eq:fcf1}.

In order to obtain the result \eqref{eq:fcf2}, we apply \eqref{eq:gr}
\begin{equation}\label{eq:intbess}
\int_0^1 w^{\frac d2}(1-w^2)^{\gamma-1}J_{\frac d2-1}(bw)\de w=2^{\gamma-1}\Gamma(\gamma)b^{-\gamma}J_{\gamma+\frac d2-1}(b),
\end{equation}
with $b=ct\rho^{\nu/2}, \nu=\frac d2-1$ and $\mu=\gamma-1$. By inserting  \eqref{eq:intbess} into \eqref{eq:fcf1}, we get
\begin{align*}
u_\nu(\xd,t)&=\frac{1}{(2\pi)^{d/2}}\left(\frac{2}{ct}\right)^{\gamma+\frac d2-1}\frac{\Gamma\left(\gamma+\frac d2\right)}{||\xd||^{\frac d2-1}}\int_0^\infty\rho^{\frac d2-\frac\nu2(\frac d2-1+\gamma)}J_{\gamma+\frac d2-1}\left(ct\rho^{\nu/2}\right)J_{\frac d2-1}(\rho||\xd||)\de\rho\\
&=\frac{1}{\pi^{d/2}(ct)^{\frac d2-1}}\frac{\Gamma\left(\gamma+\frac d2\right)}{\Gamma(\gamma)||\xd||^{\frac d2-1}}\\
&\quad\times\int_0^\infty\rho^{\frac d2+\frac \nu2-\frac{\nu d}{4}}J_{\frac d2-1}(\rho||\xd||)\de \rho\int_0^1 w^{\frac d2}(1-w^2)^{\gamma-1}J_{\frac d2-1}\left(ct\rho^{\nu/2}w\right)\de w
\end{align*}
By a simple change of variable in the above integral we obtain the result \eqref{eq:fcf2}.
\end{proof}

An alternative form of the characteristic function \eqref{eq:ffsolepd} of the solution to the fractional EPD equation is given in the next theorem and this inspires an alternative representation of the solution $u_\nu$ in terms of one-dimensional random flights arising from \eqref{eq:epd}.

\begin{theorem}\label{teo:cff}
The characteristic function \eqref{eq:ffsolepd} can be written as
\begin{align}\label{eq:ffsolepd2}
\hat u_\nu(\xi_d,t)
&=\frac{\Gamma(\gamma+\frac d2)}{\sqrt{\pi}\Gamma(\frac d2+\gamma-\frac12)ct}\int_{-ct}^{ct}\left(1-\frac{w^2}{c^2t^2}\right)^{\frac d2+\gamma-\frac12-1}\left(\frac{e^{i ||\xi_d||^{\nu/2}w}+e^{-i ||\xi_d||^{\nu/2}w}}{2}\right)\de w.
\end{align}
\end{theorem}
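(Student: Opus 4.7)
My plan is to show that \eqref{eq:ffsolepd2} is nothing but the Poisson integral representation of the Bessel function appearing in \eqref{eq:ffsolepd}, applied with argument $z=ct\|\xi_d\|^{\nu/2}$ and index $\mu=\gamma+\tfrac d2-1$.

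First I would recall the Poisson representation
\begin{equation*}
J_{\mu}(z)=\frac{(z/2)^{\mu}}{\sqrt{\pi}\,\Gamma(\mu+\tfrac12)}\int_{-1}^{1}e^{izs}(1-s^{2})^{\mu-\frac12}\,\de s,\qquad \mathrm{Re}(\mu)>-\tfrac12,
\end{equation*}
which is valid here since $\mu+\tfrac12=\gamma+\tfrac d2-\tfrac12>0$ (as $\gamma>0$ and $d\geq 1$). This identity is classical and can also be recovered from the Gegenbauer-type formula \eqref{eq:gr} already used in the paper.

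Next, I would substitute $z=ct\|\xi_d\|^{\nu/2}$ and $\mu=\gamma+\tfrac d2-1$ into the Poisson formula, then multiply both sides by the prefactor $\bigl(2/(ct\|\xi_d\|^{\nu/2})\bigr)^{\gamma+\frac d2-1}\Gamma(\gamma+\tfrac d2)$ appearing in \eqref{eq:ffsolepd}. The powers of $ct\|\xi_d\|^{\nu/2}$ cancel exactly, leaving
\begin{equation*}
\hat u_\nu(\xi_d,t)=\frac{\Gamma(\gamma+\tfrac d2)}{\sqrt{\pi}\,\Gamma(\gamma+\tfrac d2-\tfrac12)}\int_{-1}^{1}e^{ict\|\xi_d\|^{\nu/2}s}(1-s^{2})^{\gamma+\frac d2-\frac32}\,\de s.
\end{equation*}

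Then I would perform the linear change of variable $w=cts$, so that $\de w=ct\,\de s$, the limits become $\pm ct$, and $1-s^{2}=1-w^{2}/(c^{2}t^{2})$. This produces the prefactor $1/(ct)$ and the kernel $(1-w^{2}/(c^{2}t^{2}))^{\gamma+d/2-3/2}$ displayed in \eqref{eq:ffsolepd2}. Finally, since this kernel is even in $w$, the exponential $e^{i\|\xi_d\|^{\nu/2}w}$ may be symmetrized into $\tfrac12(e^{i\|\xi_d\|^{\nu/2}w}+e^{-i\|\xi_d\|^{\nu/2}w})$ without altering the value of the integral, which yields exactly \eqref{eq:ffsolepd2}.

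The argument is thus essentially a direct application of Poisson's integral formula; there is no serious analytic obstacle, provided one verifies the index condition $\gamma+\tfrac d2-\tfrac12>0$ guaranteeing convergence of the representation. The only point that requires care is bookkeeping of the constants (the $\Gamma$-factors, the factor $ct$ from the Jacobian, and the cancellation of $\|\xi_d\|^{\nu/2}$ powers), but this is mechanical.
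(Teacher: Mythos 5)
Your proof is correct and follows essentially the same route as the paper: both rest on Poisson's integral representation of $J_{\gamma+\frac d2-1}$ evaluated at $z=ct\|\xi_d\|^{\nu/2}$, followed by cancellation of the prefactor and the change of variable $w=cts$. The only cosmetic difference is that you use the exponential form $e^{izs}$ and symmetrize at the end, whereas the paper starts from the cosine form and rewrites it as $\tfrac12(e^{izw}+e^{-izw})$; these are equivalent since the kernel $(1-s^2)^{\gamma+\frac d2-\frac32}$ is even.
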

\begin{proof}
The Poisson integral representation of the Bessel functions reads
\begin{equation}\label{eq:poisson}
J_\mu(z)=\frac{(z/2)^\mu}{\sqrt\pi \Gamma(\mu+\frac12)}\int_{-1}^{+1}(1-w^2)^{\mu-\frac12}\cos(z w)\de w
\end{equation}
valid for $\mu>-\frac12,z\in\mathbb R$ (see \cite{lebedev}, pag. 114, formula (5.10.3)).
By inserting \eqref{eq:poisson} into \eqref{eq:ffsolepd}, we readily have that
\begin{align*}
\hat u_\nu(\xi_d,t)&=\frac{\Gamma(\gamma+\frac d2)}{\sqrt{\pi}\Gamma(\frac d2+\gamma-\frac12)}\int_{-1}^1(1-w^2)^{\frac d2+\gamma-\frac12-1}\cos(ct ||\xi_d||^{\nu/2}w)\de w\\
&=\frac{\Gamma(\gamma+\frac d2)}{\sqrt{\pi}\Gamma(\frac d2+\gamma-\frac12)}\int_{-1}^1(1-w^2)^{\frac d2+\gamma-\frac12-1}\left(\frac{e^{ict ||\xi_d||^{\nu/2}w}+e^{-ict ||\xi_d||^{\nu/2}w}}{2}\right)\de w\\
&=\frac{\Gamma(\gamma+\frac d2)}{\sqrt{\pi}\Gamma(\frac d2+\gamma-\frac12)ct}\int_{-ct}^{ct}\left(1-\frac{w^2}{c^2t^2}\right)^{\frac d2+\gamma-\frac12-1}\left(\frac{e^{i ||\xi_d||^{\nu/2}w}+e^{-i ||\xi_d||^{\nu/2}w}}{2}\right)\de w.
\end{align*}

\end{proof}

\begin{remark}

Since the stable subordinator process has characteristic function \eqref{eq:chstabsub}, the function $e^{\pm i ||\xi_d||^{\nu/2}w}$ seems to be related to the Fourier transform of ${\bf S}_\nu(t).$ We observe that
\begin{align}\label{eq:cfcauchy2}
\frac{P({\bf S}_\nu(t)\in \de\xd)}{\de\xd}&=\frac{1}{(2\pi)^d}\int_{\mathbb R^d}e^{-i\langle \xi_d,\xd \rangle}e^{-t||\xi_d||^{\nu}}\de\xi_d\\
&=\frac{1}{(2\pi)^{d/2}}\int_0^\infty \rho^{d-1}\frac{J_{\frac d2-1}(\rho||\xd||)}{(\rho||\xd||)^{\frac d2-1}}e^{-t\rho^{\nu}}\de \rho\notag\\
&=\frac{1}{(2\pi)^{d/2}}\frac{1}{||\xd||^{\frac d2-1}}\sum_{k=0}^\infty (-1)^k\left(\frac{||\xd||}{2}\right)^{2k+\frac d2-1}\frac{1}{k! \Gamma(k+\frac d2)}\int_0^\infty \rho^{d+2k-1}e^{-t\rho^{\nu}}\de \rho\notag\\\
&=\frac{1}{(2\pi)^{d/2}}\sum_{k=0}^\infty (-1)^k\left(\frac{1}{2}\right)^{2k+\frac d2-1}||\xd||^{2k}\frac{\Gamma(d+2k)}{k! \Gamma(k+\frac d2)t^{\frac{2k+d}{\nu}}}.\notag
\end{align}

For $\nu=1,$ by inserting the following formula
$$\binom{-\frac{d+1}{2}}{k}=\frac{(-1)^k\Gamma(d+2k)\Gamma(\frac d2+1)}{\Gamma(\frac d2+k)2^{2k-1}k!\Gamma(d+1)}$$
into \eqref{eq:cfcauchy2}, we obtain
\begin{align}\label{eq:cauchy}
\frac{P({\bf S}_d(t)\in \de\xd)}{\de\xd}&=\frac{1}{(2\pi)^{d/2}}\frac{1}{2^{\frac d2}t^d}\sum_{k=0}^\infty\binom{-\frac{d+1}{2}}{k} \left(\frac{||\xd||^2}{t^2}\right)^k\frac{\Gamma(d+1)}{\Gamma(\frac d2+1)}\\\notag
&=\frac{1}{(2\pi)^{d/2}} \frac{1}{2^{\frac d2}t^d}\frac{\Gamma(d+1)}{\Gamma(\frac d2+1)}\frac{1}{\left(1+\frac{||\xd||^2}{t^2}\right)^{\frac{d+1}{2}}}
\end{align}
Thus, for $\nu=1,$ \eqref{eq:chstabsub} becomes the characteristic function of a $d$-dimensional Cauchy process $({\bf C}_d(t), t\geq 0)$ having probability distribution equal to \eqref{eq:cauchy}.
Therefore the density function \eqref{eq:cauchy} can be interpreted as the distribution of the hitting point of a $(d+1)$-dimensional Brownian motion on the subspace $\mathbb S_1^{d}$ with starting point $(0,...,0,t)\in\mathbb R^{d+1}.$ For the sake of completeness we remind that \eqref{eq:cauchy} solves the Laplace equation
$$\frac{\partial^2p}{\partial t^2}+\Delta p=0.$$ 
\end{remark}

\begin{remark}

By Bochner's subordination it is possible to show that
\begin{equation}
{\bf S}_\nu(t)\stackrel{(\text{law})}{=}{\bf B}_d(Y_{\nu/2}(t)), \quad 0<\nu\leq 2,
\end{equation}
where $({\bf B}_d(t), t\geq 0)$ is a standard $d$-dimensional Brownian motion and $(Y_{\nu/2}(t),t\geq 0)$ is an independent $\nu/2$-stable subordinator. Equivalently, we have that
\begin{equation}
{\bf S}_\nu(t)\stackrel{(\text{law})}{=}{\bf C}_d(Y_{\nu}(t)), \quad 0<\nu< 1,
\end{equation}
where $({\bf C}_d(t),t\geq 0)$ and  $(Y_{\nu}(t),t\geq 0)$ are independent.
Indeed
\begin{align*}
E\left[e^{i\langle \xi_d,{\bf C}_d(Y_{\nu}(t))\rangle}\right]&=E\left[E\left[e^{i\langle \xi_d,{\bf C}_d(Y_{\nu}(t))\rangle}| \mathcal{F}_{Y_{\nu}}\right]\right]\\
&=E\left[ e^{-Y_{\nu}(t)||\xi_d||}\right]=e^{-t ||\xi_d||^\nu},
\end{align*}
where $\mathcal{F}_{Y_{\nu}}$ is the natural filtration of $({\bf C}_d(t),t\geq 0)$ stopped at $Y_{\nu}(t).$
\end{remark}

Our task now is to obtain the inverse Fourier transform of \eqref{eq:ffsolepd2}.
The term
$e^{\pm i||\xi_d||^{\nu/2}w}\hat\varphi(\xi_d)$ 
is the Fourier transform of the solution to the following the Cauchy problem involving the fractional Schr\"odinger equation
\begin{equation}\label{eq:schrod}
\begin{cases}
 \pm i\frac{\partial u}{\partial w}=-(-\Delta)^{\nu/4}u,\\
   u(\xd,0)=\varphi(\xd) ,
   \end{cases}
\end{equation} 
where $\varphi(\xd)\in\mathcal S(\re^d).$
Therefore, we can say that  $\frac{e^{i||\xi_d||^{\nu/2}w}+e^{-i||\xi_d||^{\nu/2}w}}{2}\hat\varphi(\xi_d)$ is the Fourier transform of the solution to
\begin{align}\label{eq:cauchypr}
\begin{cases}
\left(\frac{\partial^2}{\partial w^2}+(-\Delta)^{\nu/2}\right)u=\left (i\frac{\partial }{\partial w}+(-\Delta)^{\nu/4}\right)\left(-i\frac{\partial }{\partial w}+(-\Delta)^{\nu/4}\right)u=0,\\
u(\xd,0)=\varphi(\xd),\\
\left. \frac{\partial u(\xd,w)}{\partial w}\right|_{w=0}=0.
 \end{cases}
\end{align}

Let us denote by $p_1^\nu:=p_1^\nu(\xd,w)$ and $p_2^\nu:=p_2^\nu(\xd,w)$ the solutions to the Cauchy problems \eqref{eq:schrod}. 
We have that
\begin{align}\label{eq:op1}
p_1^\nu(\xd,w)&=\frac{1}{(2\pi)^d}\int_{\mathbb R^d} e^{-i\langle \xi_d,\xd\rangle}e^{i ||\xi_d||^{\nu/2}w}\hat\varphi(\xi_d)\de\xi_d
\end{align}
Analogously, we get
\begin{align}\label{eq:op2}
p_2^\nu(\xd,w)&=\frac{1}{(2\pi)^d}\int_{\mathbb R^d} e^{-i\langle \xi_d,\xd\rangle}e^{-i ||\xi_d||^{\nu/2}w}\hat\varphi(\xi_d)\de\xi_d.
\end{align}

We obtain the following result relating the EPD equation with the random flights.
\begin{corollary}
The solution of the Cauchy problem \eqref{fracepd} is given by
\begin{equation}\label{eq:solfepd}
u_\nu(\xd,t)=\int_{-ct}^{ct}g(w,t) \left(\frac{p_1^\nu(\xd,w)+p_2^\nu(\xd,w)}{2}\right)\de w,
\end{equation}
where
$$g(w,t)=\frac{\Gamma(\gamma+\frac d2)}{\sqrt{\pi}\Gamma(\frac d2+\gamma-\frac12)ct}\left(1-\frac{w^2}{c^2t^2}\right)_+^{\frac d2+\gamma-\frac12-1}$$
is the solution to the one-dimensional EPD equation \eqref{eq:epd} with $\lambda(t)=\frac{\gamma+\frac d2-\frac12}{2t}$.
\end{corollary}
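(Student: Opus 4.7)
The plan is to derive \eqref{eq:solfepd} directly from Theorem \ref{teo:cff} by inverting the Fourier transform \eqref{eq:ffsolepd2}. The key observation is that the representation \eqref{eq:ffsolepd2} already displays $\hat u_\nu(\xi_d,t)$ as a linear superposition (over the one-dimensional variable $w\in[-ct,ct]$) of the two exponentials $e^{\pm i\|\xi_d\|^{\nu/2}w}$, weighted by the one-dimensional EPD kernel $g(w,t)$. Since $p_1^\nu$ and $p_2^\nu$ are, by definition \eqref{eq:op1}--\eqref{eq:op2}, precisely the inverse Fourier transforms of these exponentials, applying the inverse Fourier transform to \eqref{eq:ffsolepd2} and swapping the order of integration should yield the claim.

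Concretely, I would proceed as follows. First, I would write
\begin{equation*}
u_\nu(\xd,t)=\frac{1}{(2\pi)^d}\int_{\mathbb R^d}e^{-i\langle\xi_d,\xd\rangle}\hat u_\nu(\xi_d,t)\,\de\xi_d
\end{equation*}
and substitute the expression \eqref{eq:ffsolepd2} for $\hat u_\nu$. Second, I would justify the interchange of the $\xi_d$- and $w$-integrations by Fubini's theorem; this is harmless because the $w$-integral is over the compact interval $[-ct,ct]$, the weight $g(w,t)$ is integrable there (as the one-dimensional EPD density), and the exponentials have modulus one. After the swap, the inner integral becomes
\begin{equation*}
\frac{1}{(2\pi)^d}\int_{\mathbb R^d}e^{-i\langle\xi_d,\xd\rangle}\,\frac{e^{i\|\xi_d\|^{\nu/2}w}+e^{-i\|\xi_d\|^{\nu/2}w}}{2}\,\de\xi_d,
\end{equation*}
which is $\tfrac12(p_1^\nu(\xd,w)+p_2^\nu(\xd,w))$ by \eqref{eq:op1} and \eqref{eq:op2}. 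Truncating the $w$-integration naturally to $[-ct,ct]$ via the support of $g$, one arrives at \eqref{eq:solfepd}.

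The most delicate point is the status of $p_1^\nu$ and $p_2^\nu$ themselves: the oscillatory integrals in \eqref{eq:op1}--\eqref{eq:op2} are not absolutely convergent, so a priori they must be understood in the distributional sense (as Fourier transforms of tempered distributions, equivalently as fundamental solutions of the Schr\"odinger-type equations \eqref{eq:schrod}--\eqref{eq:schrod2}). Hence I would interpret the interchange of integrations in the sense of tempered distributions paired against test functions $\varphi\in\mathcal S(\mathbb R^d)$: the result \eqref{eq:ffsolepd2} is a bona fide pointwise identity for the characteristic function, the inverse Fourier transform is then unambiguous, and Fubini on the outer pairing $\langle u_\nu,\varphi\rangle$ produces the desired formula, with $p_1^\nu, p_2^\nu$ acting as distributions against $\varphi$. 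This side-steps any convergence issue and makes \eqref{eq:solfepd} rigorous, while leaving the explicit Bessel expansions of $p_1^\nu, p_2^\nu$ at the end of \eqref{eq:op1}--\eqref{eq:op2} as the pointwise versions one works with when $\nu$ is such that the integrals make sense.
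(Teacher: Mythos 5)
Your proposal is correct and follows exactly the route the paper intends: the corollary is stated as an immediate consequence of Theorem \ref{teo:cff}, obtained by inverting the Fourier transform in \eqref{eq:ffsolepd2}, exchanging the $\xi_d$- and $w$-integrations, and identifying the inner integral with $\tfrac12(p_1^\nu+p_2^\nu)$ via \eqref{eq:op1}--\eqref{eq:op2}. Your additional care in interpreting $p_1^\nu,p_2^\nu$ as tempered distributions is a welcome refinement the paper leaves implicit.
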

\begin{proof}
The result \eqref{eq:solfepd} follows from \eqref{eq:ffsolepd2},  \eqref{eq:op1} and \eqref{eq:op2} when we consider $\varphi({\bf x}_d)$ as initial condition in the Cauchy problem \eqref{fracepd}. Furthermore, since $\varphi({\bf x}_d)\in\mathcal S(\mathbb R^d),$ by Plancharel's theorem we can conclude that $p_1^\nu(\xd,w),p_1^\nu(\xd,w)\in L^2(\mathbb R^d)$ and then $u_{\nu}({\bf x}_d,t)\in L^2(\mathbb R^d).$
\end{proof}

\begin{remark}
The result \eqref{eq:solfepd} shows that the solution of the Cauchy problem of the fractional EPD equation represents the Erd\'elyi-Kober integral of the solution of the fractional wave equation \eqref{eq:cauchypr}. We recall that the Erd\'elyi-Kober integral is defined as
$$(I_\alpha^{m}f)(x)=\frac{m}{\Gamma(\alpha)}\int_0^x (x^m-y^m)^{\alpha-1}y^{m-1}f(y)\de y,\quad \alpha>0,m>0.$$
\end{remark}

\begin{remark}
Constructing probabilistic solutions to the Schr\"odinger equation has been undertaken in \cite{isf}. In this paper the authors study a probabilistic interpretation of the solution to the Cauchy problem
\begin{align}\label{cauchysc}
&\frac{\partial u}{\partial t}=\frac{\sigma^2}{2}\Delta u,\quad u(\xd,0)=\varphi(\xd),\quad \xd\in \mathbb R^d,t>0,
\end{align}
where $\sigma$ is a complex number with Re $\sigma^2\geq 0.$ For $\sigma^2=i$ equation \eqref{cauchysc} becomes the Schr\"odinger equation. In Theorems 1,2 and 3, the authors show that the solution to \eqref{cauchysc} can be written as the mean value of a functional of the Brownian motion $({\bf B}_d(t),t\geq 0)$; that is
\begin{equation}
u(\xd,t)=E\varphi(\xd+\sigma {\bf B}_d(t)),
\end{equation}
if $\varphi$ belongs to a suitable class of functions on the $d$-dimensional complex space $\mathbb C^d$ (depending on Re $\sigma^2>0$ and Re $\sigma^2=0$, see \cite{isf}). 

In the same spirit of \cite{isf}, we suggest a probabilistic interpretation to the solution of the following fractional Cauchy problem
\begin{align}\label{fcauchysc}
&\frac{\partial u_\nu}{\partial t}=-\sigma^2(-\Delta)^{\nu/4} u_\nu,\quad u_\nu(\xd,0)=\varphi(\xd),\quad \xd\in \mathbb R^d,t>0,
\end{align}
where $\sigma\in\mathbb C$ with Re $\sigma^2\geq 0$ and $0<\nu<2.$ If we write the solution to \eqref{fcauchysc} as
\begin{equation}
u_\nu(\xd,t)=E\varphi(\xd+\sigma {\bf S}_{\nu/2}(t)),
\end{equation}
where ${\bf S}_{\nu/2}(t)\stackrel{(\text{law})}{=}{\bf B}_d(Y_{\nu/4}(t)),$ for $0<\nu<2,$ or ${\bf S}_{\nu/2}(t)\stackrel{(\text{law})}{=}{\bf C}_d(Y_{\nu/2}(t)),$ for $0<\nu<1,$ we obtain a probabilistic  representation of the solution of \eqref{fcauchysc} by considering a suitable class of functions $\varphi$ on the complex space $\mathbb C^d.$ For $\sigma^2=\mp i,$ \eqref{fcauchysc} reduces to \eqref{eq:schrod}.

We note that there is a huge recent literature on the time and space fractional Schr\"odinger equation (including the non-linear case). See, for example, the recent papers \cite{fschrod} and \cite{su}. Furthermore for some connections between the generalized Schr\"odinger equation and L\'evy processes see \cite{cuf}. 
\end{remark}

\section{A relationship between the fractional EPD equation and the PME}

The solution to the fractional EPD equation \eqref{fracepd}, after a suitable time-change can be related to some form of fractionalized PME as we now show. By changing the time scale as $t=t'^{\beta}$ (and for the reader's convenience we indicate the new time coordinate $t'$ by $t$) and by setting $c=1/\sqrt B,$ we are able to write \eqref{eq:solfepd} as follows
\begin{equation}\label{eq:fracpme}
\mathfrak u_\nu(\xd,t):= u_\nu(\xd,t^{\beta})=\int_{-\frac{t^{\beta}}{\sqrt B}}^{\frac{t^{\beta}}{\sqrt B}}\mathfrak g(w,t) \left(\frac{p_1^\nu(\xd,w)+p_2^\nu(\xd,w)}{2}\right)\de w
\end{equation}
where 
\begin{equation}
\mathfrak g(w,t):=g(w,t^{\beta})=\frac{\Gamma(\gamma+\frac d2)B}{\sqrt{\pi}\Gamma(\frac d2+\gamma-\frac12)t^\beta}\left(1-\frac{ Bw^2}{t^{2\beta}}\right)_+^{\frac d2+\gamma-\frac12-1}
\end{equation}
is the Barenblatt solution to the one-dimensional PME with $m=1+\frac{2}{d+2\gamma-3}, \beta=\frac{1}{1+m}$ and $B=\frac{m-1}{2m(m+1)}.$ The parameter $m$ is strictly greater than 1 for all $d\geq 3.$ For $d=1$ and $d=2,$ $m>1$ if $\gamma>1$ and $\gamma>\frac12,$ respectively. As shown in Theorem \ref{teo:sppme}, $\mathfrak g(w,t)$ is related to some type of random flights. Therefore, the next result allows to connect the fractional EPD equation with the PME by means of the function $\mathfrak g(w,t).$

We have now the following theorem.

\begin{theorem}
The function $\mathfrak u_\nu(\xd,t)$ solves the following fractional equation

\begin{align}\label{eq:eqfracpme}
\frac{\partial\mathfrak u_\nu (\xd,t)}{\partial t}
=-(-\Delta)^{\nu/2}\int_{-\frac{t^{\beta}}{\sqrt B}}^{\frac{t^{\beta}}{\sqrt B}} (\mathfrak g(w,t) )^m\left( \frac{  p_1^\nu(\xd,w)+  p_2^\nu(\xd,w)}{2}\right)\de w,
\end{align}
with $0<\nu\leq 2.$
\end{theorem}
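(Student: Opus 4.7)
The plan is to combine two facts already established (or essentially immediate) in the paper. First, as observed after \eqref{eq:fracpme}, the kernel $\mathfrak g(w,t)$ is the Barenblatt solution of the one-dimensional PME with parameter $m=1+\tfrac{2}{d+2\gamma-3}$, so in the interior of its support
\[
\frac{\partial \mathfrak g(w,t)}{\partial t}=\frac{\partial^2}{\partial w^2}\bigl(\mathfrak g(w,t)\bigr)^m .
\]
Second, directly from \eqref{eq:op1}-\eqref{eq:op2} (or from \eqref{eq:schrod}-\eqref{eq:schrod2} applied twice), one has $\partial_w p_j^\nu(\xd,w)=\pm i(-\Delta)^{\nu/4}p_j^\nu(\xd,w)$ for $j=1,2$, hence
\[
\frac{\partial^2}{\partial w^2}\Bigl(\tfrac{p_1^\nu(\xd,w)+p_2^\nu(\xd,w)}{2}\Bigr)=-(-\Delta)^{\nu/2}\Bigl(\tfrac{p_1^\nu(\xd,w)+p_2^\nu(\xd,w)}{2}\Bigr).
\]

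My plan is to verify \eqref{eq:eqfracpme} in Fourier space, which sidesteps regularity issues with $p_1^\nu,p_2^\nu$ as distributions. By Theorem \ref{teo:cff} (applied after the rescaling $t\mapsto t^\beta$, $c\mapsto 1/\sqrt B$) one has
\[
\widehat{\mathfrak u}_\nu(\xi_d,t)=\int_{-t^\beta/\sqrt B}^{t^\beta/\sqrt B} \mathfrak g(w,t)\,\cos\!\bigl(\|\xi_d\|^{\nu/2}w\bigr)\,\de w .
\]
Differentiating in $t$ via Leibniz, the boundary terms vanish because $\mathfrak g$ vanishes at $w=\pm t^\beta/\sqrt B$ (with exponent $\tfrac{1}{m-1}>0$), so I may replace $\partial_t\mathfrak g$ by $\partial_w^2\mathfrak g^m$ using the PME. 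Integrating by parts twice in $w$, the boundary terms again vanish because $\mathfrak g^m$ carries exponent $\tfrac{m}{m-1}>1$ (so both $\mathfrak g^m$ and $\partial_w\mathfrak g^m$ are zero at $w=\pm t^\beta/\sqrt B$), leaving
\[
\frac{\partial \widehat{\mathfrak u}_\nu}{\partial t}=\int_{-t^\beta/\sqrt B}^{t^\beta/\sqrt B} \mathfrak g^m \,\partial_w^2\cos\!\bigl(\|\xi_d\|^{\nu/2}w\bigr)\,\de w=-\|\xi_d\|^\nu\!\int_{-t^\beta/\sqrt B}^{t^\beta/\sqrt B} \mathfrak g^m\,\cos\!\bigl(\|\xi_d\|^{\nu/2}w\bigr)\,\de w .
\]
Applying inverse Fourier transform and recognising $-\|\xi_d\|^\nu$ as the symbol of $-(-\Delta)^{\nu/2}$, together with the fact that $\cos(\|\xi_d\|^{\nu/2}w)$ is the Fourier transform of $\tfrac{1}{2}(p_1^\nu+p_2^\nu)$, yields exactly \eqref{eq:eqfracpme}.

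\textbf{Main obstacle.} The delicate points are (i) justifying the differentiation under the integral sign at the moving free boundary $w=\pm t^\beta/\sqrt B$, and (ii) interchanging the non-local operator $(-\Delta)^{\nu/2}$ with the $w$-integral when one returns to physical space. The first is handled by the vanishing of $\mathfrak g$ (and later of $\mathfrak g^m$ and $\partial_w\mathfrak g^m$) at the free boundary, which is the standard regularity of the Barenblatt profile and is the reason one should work on a support shrinking strictly inside, then let it grow. The second is naturally handled by the Fourier approach above, where $(-\Delta)^{\nu/2}$ becomes multiplication by $\|\xi_d\|^\nu$ and Fubini is applied on a compact $w$-interval; this is really the technical content of the proof, since $p_1^\nu, p_2^\nu$ themselves are only tempered distributions (consider $\nu=2$ as a limiting case).
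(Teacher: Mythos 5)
Your proposal is correct and follows the same logical skeleton as the paper's proof: differentiate under the integral (boundary terms vanish since $\mathfrak g=0$ at $w=\pm t^\beta/\sqrt B$), replace $\partial_t\mathfrak g$ by $\partial_w^2\mathfrak g^m$ via the one-dimensional PME, integrate by parts twice in $w$ (boundary terms vanish since $\mathfrak g^m$ and $\partial_w\mathfrak g^m$ vanish at the free boundary, as $\tfrac{m}{m-1}>1$), and identify the resulting second $w$-derivative with the action of $-(-\Delta)^{\nu/2}$. The only real difference is the medium: the paper carries out the two integrations by parts in physical space, using $\partial_w p_j^\nu=\pm i(-\Delta)^{\nu/4}p_j^\nu$ at each step together with the semigroup identity $(-\Delta)^{\nu/4}(-\Delta)^{\nu/4}=(-\Delta)^{\nu/2}$, whereas you conjugate everything by the Fourier transform and let $\partial_w^2\cos(\|\xi_d\|^{\nu/2}w)=-\|\xi_d\|^\nu\cos(\|\xi_d\|^{\nu/2}w)$ do the work. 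Your version buys something concrete: the paper's argument invokes the boundedness of $p_1^\nu(\xd,\pm t^\beta/\sqrt B)$ and applies $\partial_w$ and $(-\Delta)^{\nu/4}$ to $p_1^\nu,p_2^\nu$ pointwise, which is delicate since these are in general only tempered distributions (for $\nu=2$ they degenerate to Dirac deltas, as the paper's own subsequent remark shows); on the Fourier side the integrand $\mathfrak g(w,t)\cos(\|\xi_d\|^{\nu/2}w)$ is a genuine smooth, compactly supported function of $w$, so Leibniz and the integrations by parts are elementary. The one point you should make explicit is the final step back to physical space: equality of the Fourier transforms identifies $\mathfrak u_\nu$ and the right-hand side only as tempered distributions, which is the natural sense in which \eqref{eq:eqfracpme} holds; with that caveat stated, your argument is complete.
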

\begin{proof}
Since $\mathfrak g(\pm \frac{t^{\beta}}{\sqrt B},t)=0,$ from \eqref{eq:fracpme} we have that
$$\frac{\partial\mathfrak u_\nu (\xd,t)}{\partial t}=\int_{-\frac{t^{\beta}}{\sqrt B}}^{\frac{t^{\beta}}{\sqrt B}}\frac{\partial \mathfrak g(w,t)}{\partial t} \left(\frac{p_1^\nu(\xd,w)+p_2^\nu(\xd,w)}{2}\right)\de w.$$

Therefore, bearing in mind that $\mathfrak g$ is the fundamental solution of the PME \eqref{eq:pme} for $d=1,$ the following equality holds
\begin{align}\label{eq:fracint}
\frac{\partial\mathfrak u_\nu (\xd,t)}{\partial t}&=\int_{-\frac{t^{\beta}}{\sqrt B}}^{\frac{t^{\beta}}{\sqrt B}}\frac{\partial^2 (\mathfrak g(w,t) )^m}{\partial w^2} \left(\frac{p_1^\nu(\xd,w)+p_2^\nu(\xd,w)}{2}\right)\de w\\
&=-\frac 12\int_{-\frac{t^{\beta}}{\sqrt B}}^{\frac{t^{\beta}}{\sqrt B}}\frac{\partial   (\mathfrak g(w,t) )^m}{\partial w} \left(\frac{\partial p_1^\nu(\xd,w)}{\partial w}+\frac{\partial p_2^\nu(\xd,w)}{\partial w}\right)\de w\notag,
\end{align}
where in the last step we have used the fact that $\frac{\partial \mathfrak g^m(\pm \frac{t^{\beta}}{\sqrt B},t^{\beta})}{\partial w}=0$ and the quantities $p_1^\nu(\xd,\pm \frac{t^{\beta}}{\sqrt B})$ and  $p_2^\nu(\xd,\pm \frac{t^{\beta}}{\sqrt B})$ are bounded. Since $p_1^\nu$ and $p_2^\nu$ satisfy the equation \eqref{eq:op1} and \eqref{eq:op2}, respectively, we can write \eqref{eq:fracint} as
\begin{align}\label{eq:fracint2}
\frac{\partial\mathfrak u_\nu (\xd,t)}{\partial t}&=-\frac{1}{2}\int_{-\frac{t^{\beta}}{\sqrt B}}^{\frac{t^{\beta}}{\sqrt B}}\frac{\partial (\mathfrak g(w,t) )^m}{\partial w} \left(i(-\Delta)^{\nu/4} p_1^\nu(\xd,w)-i (-\Delta)^{\nu/4}p_2^\nu(\xd,w)\right)\de w\\
&=-\frac{i(-\Delta)^{\nu/4}}{2}\int_{-\frac{t^{\beta}}{\sqrt B}}^{\frac{t^{\beta}}{\sqrt B}}\frac{\partial   (\mathfrak g(w,t) )^m}{\partial w} \left( p_1^\nu(\xd,w)- p_2^\nu(\xd,w)\right)\de w,\notag
\end{align}
where the last step follows from the following observation: since $p_1^\nu(\xd,w),p_1^\nu(\xd,w)\in L^2(\mathbb R^d),$ then the integral appearing in the second line of \eqref{eq:fracint2} belongs to $L^2(\mathbb R^d).$
A further integration by parts in \eqref{eq:fracint2} yields
\begin{align*}
\frac{\partial\mathfrak u_\nu (\xd,t)}{\partial t}
&=\frac{i(-\Delta)^{\nu/4}}{2}\int_{-\frac{t^{\beta}}{\sqrt B}}^{\frac{t^{\beta}}{\sqrt B}} \mathfrak  (\mathfrak g(w,t) )^m \left( \frac{ \partial p_1^\nu(\xd,w)}{\partial w}- \frac{ \partial p_2^\nu(\xd,w)}{\partial w}\right)\de w\\
&=-(-\Delta)^{\nu/4}(-\Delta)^{\nu/4}\int_{-\frac{t^{\beta}}{\sqrt B}}^{\frac{t^{\beta}}{\sqrt B}}  (\mathfrak g(w,t) )^m \left( \frac{  p_1^\nu(\xd,w)+  p_2^\nu(\xd,w)}{2}\right)\de w,
\end{align*}
where the last step is justified analogously to \eqref{eq:fracint2}.
Finally, by applying the semigroup property for the fractional Laplace operator, we obtain the result \eqref{eq:eqfracpme}.
\end{proof}

\begin{remark}
In the special case where $\nu=2$ and $\varphi(\xd)=\delta(\xd),$ we obtain the PME. Indeed, in this case $p_1^2(\xd,w)=\delta(||\xd||+w)$ and $p_2^2(\xd,w)=\delta(||\xd||-w)$ and then
\begin{align*}
\frac{\partial\mathfrak u_2 (\xd,t)}{\partial t}
&=\Delta\int_{-\frac{t^{\beta}}{\sqrt B}}^{\frac{t^{\beta}}{\sqrt B}}  (\mathfrak g(w,t) )^m \left( \frac{ \delta(||\xd||-w)+  \delta(||\xd||+w)}{2}\right)\de w\\
&=\Delta\left( \frac{  (\mathfrak g(||\xd||,t))^m+ ( \mathfrak g(-||\xd||,t))^m}{2}\right)	\\
&=\Delta (\mathfrak u_2 (\xd,t))^m.
\end{align*}
\end{remark}

\section{Solutions to the higher-order EPD equations}

We now turn back to the solution \eqref{fracepd} of the EPD equation and consider the case where $\nu>2, d=1.$ Our starting point here is the fractional equation
\begin{equation}
\left(\frac{\partial}{\partial t}-i\frac{\partial^\nu}{\partial |x|^\nu}\right)\left(\frac{\partial}{\partial t}+i\frac{\partial^\nu}{\partial |x|^\nu}\right)u=0
\end{equation}
which for integer values of $\nu=n,n\in \mathbb N$ and $c_n>0,$ reduces to the form
\begin{equation}\label{eq:higpde}
\frac{\partial^2 u}{\partial t^2}+c_n\frac{\partial^{2n}u}{\partial x^{2n}}=0.
\end{equation}
Equation \eqref{eq:higpde} for $n=2, c_n=1$ becomes the famous equation of vibrations of rods.

The solutions of 
\begin{align}\label{eq:hosyst}
\begin{cases}
\left(\frac{\partial}{\partial t}-c_n^{1/2}\frac{\partial^n}{\partial x^n}\right)u=0, \\
\left(\frac{\partial}{\partial t}+c_n^{1/2}\frac{\partial^n}{\partial x^n}\right) u=0,
\end{cases}
\end{align}
are solutions of
\begin{equation}\label{eq:hoe}
\left(\frac{\partial}{\partial t}-c_n^{1/2}\frac{\partial^n}{\partial x^n}\right)\left(\frac{\partial}{\partial t}+c_n^{1/2}\frac{\partial^n}{\partial x^n}\right)u=0,
\end{equation}
which, for special values of $c_n$ yield higher-order Schr\"odinger equations. Clearly the Fourier transforms of \eqref{eq:hosyst} become
\begin{align*}
\begin{cases}
\frac{\partial \hat u}{\partial t}=c_n^{1/2}(-i\xi)^n\hat u \\
\frac{\partial \hat u}{\partial t}=-c_n^{1/2}(-i\xi)^n\hat u
\end{cases}
\end{align*}
so that
\begin{align*}
\hat u(\xi,t)=\frac12\left(e^{c_n^{1/2}(-i\xi)^n t}+e^{-c_n^{1/2}(-i\xi)^n t}\right)=\cos(c_n^{1/2}e^{-in\frac\pi 2}|\xi|^n t)
\end{align*}
solves the equation emerging from the Fourier transform of \eqref{eq:hoe} with initial condition $u(x,0)=\delta(x).$ The solution $u$ takes the form
\begin{align}\label{eq:solho}
u(x,t)&=\frac{1}{2\pi}\int_{-\infty}^{+\infty} e^{-i\xi x}\cos(c_n^{1/2}e^{-in\frac\pi 2}|\xi|^n t)\de\xi\notag\\
&=\frac{1}{\pi}\int_{0}^{+\infty} \cos(\xi x)\cos(c_n^{1/2}e^{-in\frac\pi 2}|\xi|^n t)\de\xi.
\end{align}
With the choice of $c_n^{1/2}=e^{i\pi n/2}=(-1)^{n/2},$ the solution \eqref{eq:solho} reduces to
\begin{equation}\label{eq:intpseud}
u(x,t)=\frac{1}{\pi}\int_{0}^{+\infty} \cos(\xi x)\cos(|\xi|^n t)\de\xi.
\end{equation}

For $n=2,$ the integral \eqref{eq:intpseud} yields
\begin{equation}\label{eq:intpseud2}
u(x,t)=\frac{1}{\pi}\int_{0}^{+\infty} \cos(\xi x)\cos(|\xi|^2 t)\de\xi=\frac{1}{\sqrt{2\pi t}}\cos\left(\frac{x^2}{2t}-\frac\pi 4\right),
\end{equation}
and represents the fundamental solution to the equation of vibrations of rods. 

Pseudoprocesses related to higher-order heat equations $\frac{\partial u}{\partial t}=c_n\frac{\partial^{n}u}{\partial x^{n}}$ have been introduced in several papers mimicking the construction of the Wiener measure. Thus,  
a signed measure $P$ is constructed on a set of real-valued functions $t : t\in [0,\infty)\to x(t)$ called the sample paths of the process, in a manner similar to that described, for instance, by \cite{kry} and \cite{dal}. In particular, for cylinder sets $C$ of the form
$$C:=\{x: a_j\leq x(t)\leq b_j, j=1,2,...,n\},\quad 0<t_1<..<t_j<...<t_n,$$
the measure $P$ is defined by
\begin{align}\label{eq:measupse}
P(C)=\int_{a_1}^{b_1}...\int_{a_n}^{b_n}\prod_{j=1}^n u(x_j-x_{j-1},t_j-t_{j-1})\de x_j
\end{align}
where $t_0=0, x_0=0$ and $x_j:=x(t_j).$ The measure $P$ is countably additive on the field of sets generated by $x(t_j), j = 1, 2, . , n,$ for fixed $t_j$ and $n$ finite. 
Pseudoprocesses related to \eqref{eq:higpde} for $n=2$ and $c_n=-1/4$ have been studied in \cite{od} and the corresponding measure on cylinder sets becomes

$$P(C)=\int_{a_1}^{b_1}...\int_{a_n}^{b_n} \frac{(2\pi)^{-n/2}}{\prod_{j=1}^n\sqrt{t_j-t_j}}\cos\left(\sum_{j=1}^n \frac{(x_j-x_{j-1})^2}{2(t_j-t_{j-1})}-n\frac\pi4\right)\de x_j.$$

For $n=3, c_n=\mp 1,$ from \eqref{eq:intpseud} we have that
\begin{align*}
u(x,t)&=\frac{1}{\pi}\int_{0}^{+\infty} \cos(\xi x)\cos(|\xi|^3 t)\de\xi\\
&=\frac{1}{\pi}\int_{0}^{+\infty} [\cos (\xi x+\xi^3t)+\cos (\xi x-\xi^3t)]\de\xi\\
&=\frac{1}{(3t)^{1/3}}\left[Ai\left(\frac{x}{(3t)^{1/3}}\right)+Ai\left(-\frac{x}{(3t)^{1/3}}\right)\right]
\end{align*}
where $Ai(x):=\frac1\pi\int_0^\infty\cos \left(\frac{t^3}{3}+xt\right)\de t$ is the Airy function of first order.

In light of Theorem \ref{teo:cff}, the solution to the higher-order EPD equation
\begin{equation}
\frac{\partial^2 u}{\partial t^2}+\frac{2\lambda}{t}\frac{\partial u}{\partial t}=c_n\frac{\partial^{2n}u}{\partial x^{2n}},\quad \lambda >0,
\end{equation}
is the law of the composition $X_{2n}(T(t)),$ where $(X_{2n}(t),t\geq 0)$ is the pseudoprocess whose signed measure is 
\begin{equation}\label{eq: pseud}
u(x,t)=\frac{1}{\pi}\int_{0}^{+\infty} \cos(\xi x)\cos(|\xi|^{2n} t)\de\xi,
\end{equation}
and $(T(t),t\geq 0)$ is the telegraph process having probability density function given by the solution $g(x,t)$ to the one-dimensional EPD equation. For a more detailed description of pseudoprocesses see, for example, \cite{lachal}. We were able to obtain explicitly \eqref{eq: pseud} in the special cases $n=2,3$ as shown above.

\section*{Appendix}

In the derivation of the Kompanets-Zel'dovich-Barenblatt solution \eqref{eq:bsolbis}, one must consider a function
\begin{equation}\label{eq:bsol}
f(\xd,t)=t^\delta\left(1-B\frac{||\xd||^2}{t^\eta}\right)_+^\gamma,
\end{equation}
where $\delta,\eta,\gamma,B$ represent real constant,
and compare the following derivatives for $f:=f(\xd,t)$
\begin{align*}
&\frac{\partial f}{\partial t}=t^{\delta -1}(\delta -\eta \gamma)\left(1-B\frac{||x||^2}{t^\eta}\right)_+^\gamma+\eta t^{\delta -1}\left(1-B\frac{||x||^2}{t^\eta}\right)_+^{\gamma-1},\end{align*} 
\begin{align*}
\Delta (f^m)&=-2Bt^{m\delta -\eta}\gamma m\left(1-B\frac{||x||^2}{t^\eta}\right)_+^{\gamma m-1}(d+2(\gamma m-1))\notag\\
&\quad+\gamma m(\gamma m-1)4B t^{m\delta -\eta}\left(1-B\frac{||x||^2}{t^\eta}\right)_+^{\gamma m-2}
\end{align*} 
In order to prove that \eqref{eq:bsol} satisfies \eqref{eq:pme} we have to impose that
\begin{equation}
\begin{cases}
\gamma=\gamma m-1\\
\delta-1=m\delta-\eta\\
(\delta-\eta\gamma)=-\gamma m2 B(d+2(\gamma m-1))\\
\gamma\eta =\gamma m(\gamma m-1)4B
\end{cases}
\end{equation}
from which we derive
$$\gamma=\frac{1}{m-1},\quad \delta=-\frac{d}{2+d(m-1)},\quad \eta=\frac{2}{2+d(m-1)}, \quad B:=\frac{\alpha(m-1)}{2md}=\frac{m-1}{2m(2+d(m-1))}.$$


\begin{thebibliography}{99}

\bibitem{app} Applebaum, D. (2009) {\it L\'{e}vy Process and Stochastic Calculus}. Second edition. Cambridge Studies in Advanced Mathematics, 116. Cambridge University Press, Cambridge, 460 pp. 

\bibitem{bar} Barenblatt, G. I. (1952) On some unsteady motions of a liquid or a gas in a porous medium, {\it Prikl. Mat. Mekh.}, {\bf 16}, 67-78 (in Russian).
\bibitem{ben} Benachour, S., Chassaing, P., Roynette, B., Vallois, P. (1996) Processus associ\'{e}s a l'\'{e}quation des milieux poreux, {\it Annali della Scuola Superiore di Pisa}, {\bf 4}, 793-832. 


\bibitem{fschrod} Bezerra, F.D.M., Carvalho, A.N., Dlotko, T., Nascimento, M.J.D., (2018) Fractional Schrödinger equation; solvability and connection with classical Schrödinger equation, {\it J. Math. Anal. Appl.}, {\bf 35}, 336-360.



\bibitem{biler} Biler, P., Imbert, C., Karch, G. (2015) The Nonlocal Porous Medium Equation: Barenblatt Profiles and Other Weak Solutions, {\it Arch. Rational Mech. Anal.}, {\bf 215}, 497-529.


\bibitem{catt} Cattaneo, C. R. (1958) Sur une forme de l’ equation de la chaleur \'{e}liminant le paradoxe d’une propagation instantan\'{e}e, {\it C. R. Acad. Sci.}, {\bf 247}, 431-433.

\bibitem{cuf} Cufaro Petroni, N., Pusterla, M. (2009) L\'evy processes and  Schr\"odinger equation, {\it Physica A}, {\bf 388}, 824-836. 



\bibitem{dal}  Daletsky, Yu.L., Fomin, S.V.
Generalized measures in function spaces. {\it
Theory Probab. Appl.}, {\bf 10},  304-316.

\bibitem{DG12} De Gregorio, A. (2012) On random flights with non-uniformly distributed directions, {\it Journal of Statistical Physics}, {\bf 147}, 382-411.

\bibitem{dgos} De Gregorio, A., Orsingher, E., Sakhno, L. (2005) Motions with finite velocity analyzed with order statistics and differential equations, {\it Theory of Probability and Mathematical Statistics}, {\bf 71}, 63-79. 
\bibitem{dgo} De Gregorio, A., Orsingher, E. (2012) Flying randomly in $\mathbb R^d$ with Dirichlet displacements, {\it Stochastic Processes and their Applications}, {\bf 122}, 676-713.


\bibitem{dic} Di Crescenzo, A. (2002) Exact transient analysis of a planar random motion with three directions, {\it Stoch. Stoch. Rep.}, {\bf 72}, 175-189. 

\bibitem{garra} Garra, R., Orsingher, E. (2014) Random flights governed by Klein-Gordon-type partial differential equations, {\it Stochastic Processes and their Applications}, {\bf 124}, 2171-2187.

\bibitem{garra2} Garra, R., Orsingher, E. (2016) Random Flights Related to the Euler-Poisson-Darboux Equation, {\it Markov Processes and Related Fields}, {\bf 22}, 87-110.


\bibitem{ghosh} Ghosh, A., Rastegar, R., Roitershtein, A. (2014) On a directionally reinforced random walk, {\it Proceedings of the American Mathematical Society}, {\bf 142}, 3269-3283.

\bibitem{glu}  Glushak, A.V., Orsingher, E. (1998) General solution of a hyperbolic equation that arises in the study of motion with random acceleration, {\it Theory of Probability and Mathematical Statistics}, {\bf 55}, 49-53.

\bibitem{gr} Gradshteyn, I.S., Ryzhik, I.M. (1980) {\it Tables of Integrals, Series, and Products}. Fourth edition. Academic Press, New York.
\bibitem{isf} Ibragimov, I.A., Smorodina,  N.V., Faddeev, M.M. (2015)  A limit theorem on convergence of random walk functionals to a solution of the Cauchy problem for the equation $\frac{\partial u}{\partial t}=\frac{\sigma^2}{2}u$ with complex $\sigma$, {\it Journal of Mathematical Sciences}, {\bf 206}, 171-180.


\bibitem{isf2} Ibragimov, I.A., Smorodina,  N.V., Faddeev, M.M. (2015) Limit theorems for symmetric random walks and probabilistic approximation of the Cauchy problem solution for Schr\"odinger type evolution equations, {\it Stochastic Processes and their Applications}, {\bf 125}, 4455-4472.


\bibitem{inoue} Inoue, M. (1989) A Markov process associated with a porous medium equation, {\it Proc. Japan Acad.}, {\bf 60}, Ser. A, 157-160.
\bibitem{inoue2} Inoue, M. (1989) Construction of diffusion processes associated with a porous medium equation, {\it Hiroshima Mathematical Journal}, {\bf 19}, 281-297.
 \bibitem{inoue3} Inoue, M. (1991) Derivation of a porous medium equation from many Markovian particles and the propagation of chaos, {\it Hiroshima Mathematical Journal}, {\bf 21}, 85-110.
 
 
 \bibitem{kry} Krylov, V.Yu. (1960) Some properties of the distribution corresponding to the equation $\frac{\partial u}{\partial t}=(-1)^{p+1}\frac{\partial^{2p}u}{\partial x^{2p}}$, {\it Soviet Math. Dokl.}, {\bf 1}, 260-263.

 
 \bibitem{kw} Kwasnicki, M. (2017) Ten equivalent definitions of the fractional Laplace operator, {\it Fractional Calculus and Applied Analysis}, {\bf 20}, 7-51.
 
 \bibitem{lachal} Lachal, A. (2003) Distributions of sojourn time, maximum and minimum for pseudo-processes governed by higher-order heat-type equations, {\it Electronic Journal of Probability}, {\bf 20}, 53 pp. 
 
 \bibitem{lebedev} Lebedev, N.N. (1972) {\it Special functions and their applications}. Dover Publication, Inc., New York.
 
\bibitem{lecaer} Le Ca\"er, G. (2010) A Pearson random walk with steps of uniform orientation and Dirichlet distributed lengths, {\it Journal of Statistical Physics}, {\bf 140}, 728-751.

\bibitem{lecaer2} Le Ca\"er, G. (2011) A new family of solvable Pearson–Dirichlet random walks, {\it Journal of Statistical Physics}, {\bf144}, 23-45.





\bibitem{OrsDG} Orsingher, E., De Gregorio, A. (2007) Random flights in higher spaces, \emph{Journal of Theoretical Probability}, {\bf 20},  769-806.

\bibitem{od} Orsingher, E., D'Ovidio, M. (2011) Vibrations and fractional vibrations of rods, plates and Fresnel pseudo-processes, \emph{Journal of Statistical Physics}, {\bf 145},  143-174.

\bibitem{ot} Orsingher, E., Toaldo, B. (2017) Space-time fractional equations and the related stable processes at random time, {\it Journal of Theoretical Probability}, {\bf 30}, 1-26.

 \bibitem{pattle} Pattle, R. E. (1959) Diffusion from an instantaneous point source with a concentration-dependent coefficient, {\it Quart. J. Mech. Appl. Math.}, {\bf 12}, 407-409.
 
 
 \bibitem{pog} Pogorui, A., A., Rodriguez-Dagnino, R.M. (2013) Random motion with gamma steps in higher dimensions,  {\it Statistics \& Probability Letters}, {\bf 83}, 1638–1643.
 
 
 \bibitem{stadje} Stadje W. (1987) The exact probability distribution of a
two-dimensional random walk, \emph{Journal of Statistical
Physics,} $\mathbf{46}$, 207-216.


\bibitem{stadje2} Stadje, W. (1989) Exact probability distributions for noncorrelated random walk models, \emph{Journal of Statistical
Physics,} $\mathbf{56}$, 415-435.



\bibitem{stroock} Stroock, D.W. (1974) Some stochastic processes which arise from a model of the motion of a bacterium,
{\it Z. Wahrscheinlichkeitstheorie und Verw. Gebiete}, {\bf 28}, 303-315. 

\bibitem{su} Su, X., Zhao, S., Li, M. (2019) Dispersive estimates for time and space fractional Schrödinger equations, to appear in {\it Math Meth Appl Sci}. 
 
\bibitem{vazquez} Vazquez, J.L. (2007) {\it The Porous Medium Equation. Mathematical Theory.} Oxford Math. Monogr., Oxford Univ. Press, Oxford.
\bibitem{vazquez2} Vazquez, J.L. (2014) Barenblatt solutions and asymptotic behaviour for a nonlinear fractional heat equation of porous medium type, {\it J. Eur. Math. Soc.}, {\bf 16}, 769-803.
\bibitem{vazhyp} Vazquez, J.L. (2015) Fundamental solution and long time behavior of the Porous Medium Equation in hyperbolic space, {\it Journal de Math\'{e}matiques Pures et Appliqu\'{e}es}, {\bf 104}, 454-484.
\bibitem{zel} Zel’dovich, Ya. B., Kompanets, A. S. (1950) Towards a theory of heat conduction with thermal conductivity depending on the temperature. In: {\it Collection of Papers Dedicated to the 70th Birthday of A. F. Ioffe, Izd. Akad. Nauk SSSR}, Moscow, 61-72.
\end{thebibliography}
\end{document}